\documentclass[english]{article}
\usepackage[T1]{fontenc}
\usepackage[utf8]{inputenc}
\usepackage{lmodern}
\usepackage[a4paper]{geometry}
\usepackage{babel}

\usepackage{mathtools}
\usepackage{stmaryrd}
\usepackage{amsmath, amssymb, amsthm}

\usepackage{enumitem} %% Pour l'énumération des sections dans l'intro

\usepackage{nicematrix} %% Pour écrire des matrices avec des labels sur les lignes/colonnes

\usepackage{csquotes}
\usepackage{authblk}

\usepackage[dvipsnames]{xcolor}
\usepackage{soul}%% Pour la commande de highlight \hl

\usepackage{tikz-cd}
\usepackage{biblatex}
\newtheorem{theorem}{Theorem}[section]

\newtheorem{proposition}[theorem]{Proposition}
\newtheorem{corollary}[theorem]{Corollary}
\theoremstyle{definition}
\newtheorem{definition}[theorem]{Definition}
\newtheorem{example}[theorem]{Example}
\newtheorem{lemma}[theorem]{Lemma}
\theoremstyle{remark}
\newtheorem*{remark}{Remark}

\newcommand{\setR}{\mathbb{R}}
\newcommand{\setZ}{\mathbb{Z}}

\newcommand{\setN}{\mathbb{N}}

\newcommand{\HyperbSpace}{\mathbb{H}}

\DeclareMathOperator{\tr}{tr}
\DeclareMathOperator{\ad}{ad}
\DeclareMathOperator{\Ad}{Ad}

\DeclareMathOperator{\End}{End}

\DeclareMathOperator{\Mat}{Mat}

\DeclareMathOperator{\GL}{GL}
\DeclareMathOperator{\SL}{SL}
\DeclareMathOperator{\PGL}{PGL}
\DeclareMathOperator{\PSL}{PSL}

\DeclareMathOperator{\Vect}{Vect}
\DeclareMathOperator{\Stab}{Stab}
\DeclareMathOperator{\id}{id}
\DeclareMathOperator{\Id}{Id}

\DeclareMathOperator{\Grass}{Gr}

\DeclareMathOperator{\im}{im}

\renewcommand{\so}{\operatorname{\mathfrak{so}}} %% Je ne sais pas quelle est la commande \so existante.

\renewcommand{\sl}{\mathfrak{sl}}

\DeclareMathOperator{\lie}{Lie}

\newcommand{\Glie}{\mathfrak g}
\newcommand{\Hlie}{\mathfrak h}
\newcommand{\Klie}{\mathfrak k}

\newcommand{\Plie}{\mathfrak{p}}

\DeclareMathOperator{\opU}{{\mathcal{U}}}

\newcommand{\Orb}{\mathcal{O}}
\DeclareMathOperator{\Flag}{\mathcal{F}}
\newcommand{\Nilp}{\mathcal{N}}

\newcommand{\isom}{{\xrightarrow{\sim}}}

\newcommand{\Killing}{\kappa}
\DeclareMathOperator{\Base}{{\mathcal{B}}}

\renewcommand{\d}{{\operatorname d}} %% \d est par défaut un sous-point (accent)

\newcommand{\sprod}[2]{\left\langle #1 \,,\, #2 \right\rangle}

\newcommand{\argdot}{{\ \cdot \ }}

\renewcommand{\emptyset}{\varnothing}

\title{Nilpotent orbits of $\sl(n,\setR)$ admit no Gibbs state}
\date{\today}

\author{Guillaume Neuttiens}
\affil{Friedrich-Schiller Universität Jena, Germany}
\author{Jérémie Pierard de Maujouy}
\affil{Institut Denis Poisson - Université de Tours, France}

\begin{document}

\maketitle

\begin{abstract}
     Gibbs states are probability distributions defined on Hamiltonian $G$-manifolds. They are parametrized by an open set of the Lie algebra $\Glie$ and arise as natural equilibrium states with regard to the action of $G$ on the system. In this paper, we focus on nilpotent orbits of $\SL(n, \mathbb{R)}$. We show that for $n>2$, those orbits do not admit any Gibbs state.
\end{abstract}

\section{Introduction}

In~\cite{SSDEng}, Souriau introduced a notion of \enquote{statistical equilibrium states} on any symplectic manifold $M$ with a Lie group $G$ of symmetries and a momentum map.
They are probability distributions on $M$ that are parametrized by an infinitesimal symmetry $\beta \in \Glie$ called a \enquote{generalized temperature}.
The interior of the set of generalized temperatures is called the \emph{Gibbs set} of $M$, it is a $\Ad$-invariant convex subset of $\Glie$ that can be empty.

Gibbs sets of symplectic manifolds come with a rich geometry: they have a natural Poisson structure, and as the parameter space for a family of probability measures, they form statistical manifolds.
In particular, they have a natural $G$-invariant Riemannian metric, called the Fisher-Rao metric.
However, they have so far received little attention, and few examples have been studied in detail.
Elementary examples were worked out by Marle~\cite{MarleExamples}, then by a collaboration that includes the two authors~\cite{NilpotentOrbitsGSI23}, as well as~\cite{HessianGas}.

Coadjoint orbits of Lie algebras form in a sense the irreducible blocks of Hamiltonian $G$-manifolds, so their Gibbs sets are of particular interest.
They are the focus of the aforementioned~\cite{NilpotentOrbitsGSI23}, that studied in particular the case of the conjugacy classes of index $2$ nilpotent real matrices of size $3$.
The Gibbs sets correspond to time-like cones in $\setR^3$ equipped with a Lorentzian product and their Fisher-Rao metrics are isometric to a Riemannian direct product $\setR \times \HyperbSpace$, where $\HyperbSpace^2$ is the hyperbolic plane. 
Gibbs sets of coadjoint orbits also belong to a class of exponential families of probabilities that includes many commonly used distributions in statistics and is of interest in information geometry~\cite{TojoYoshino}.

In this paper, we restrict our attention to symmetry under the group $\SL(n, \setR)$.
In this case, the Killing form establishes a correspondance between coadjoint orbits and adjoint orbits, so that coadjoint orbits of linear Lie algebras can be represented as conjugacy classes of matrices.
The nilpotent orbits are of particular interest since they have a discrete, combinatoric classification and form a part of more general coadjoint orbits, according to the Jordan-Chevalley decomposition.

In this paper, we prove that no nonzero nilpotent orbit of $\sl(n, \setR)$ admits generalized temperatures when $n\geqslant 3$ (in contrast with the case $n=2$ which was studied in~\cite{NilpotentOrbitsGSI23}). To this end, we present three different arguments that cover different cases and are each of independent interest. We must mention that the cases handled here are covered by the results of~\cite{NeebGibbsEnsembles}. We think nevertheless that our approach, more concrete, has independent value and may lead in further work to explicit computations of the partition functions.
In Section~\ref{secno:GibbsSets}, we give the definition and the basic properties of the Gibbs set of a Hamiltonian G-manifold.
Section~\ref{secno:AdjointOrbits} is dedicated to generalities on adjoint orbits of semisimple Lie algebras such as their symplectic structure, and their Gibbs sets.
In Section~\ref{secno:slnR} we prove by elimination that nonzero nilpotent adjoint orbits of $\sl(n, \setR)$ admit no generalized temperature when $n\geqslant 3$. Each of the three subsection eliminates a subset of the nilpotent coadjoint orbits:
\begin{enumerate}[label = Section 4.\arabic*:, align=left]% On aimerait énumérer 4.1, 4.2, 4.3
	\item Non-zero nilpotent adjoint orbits stable under the negation map $x\mapsto -x$ never admit generalized temperatures,
	\item Decomposing the partition function into integrals over affine lines shows that nilpotent orbits of index $3$ or higher admit no generalized temperature,
	\item Decomposing the partition function into Lebesgue integrals on matrix spaces shows that nilpotent orbits of index 2 and maximal rank admit no generalized temperature.
\end{enumerate}
General results regarding the structure of the centralizer of a nilpotent matrix, including a determinant formula that is needed in the article, are gathered in Appendix~\ref{secno:centralizer}.

\section{Gibbs sets of Hamiltonian $G$-manifolds}\label{secno:GibbsSets}

In the following we consider a connected symplectic manifold $(M, \omega)$ and  a connected Lie group $G$ with Lie algebra $\mathfrak{g}:=\lie(G)$. Let $\Phi: G\times M\to M$ be a symplectic action of $G$ on $M$. For every $X\in G$, we denote $X^*$ the associated fundamental vector field on $M$ through the action of $\Phi$.
\begin{definition}
   The action $\Phi$ is \emph{Hamiltonian} if it admits a moment map, that is, a  mapping $\mu: M\to \mathfrak{g}^* $ verifying the Hamilton equation
   \begin{equation*}
       \iota_{X^*}\omega=-d\langle \mu, X\rangle
   \end{equation*}
   for all $X\in \mathfrak{g}$.
   
   We say that $\Phi$ is \emph{strongly Hamiltonian} if the moment map is equivariant for the coadjoint action of $G$ on $\mathfrak{g}^*$, i.e. if for all $g\in G, m \in M$, 
   \begin{equation*}
       \mu (\Phi(g,x) ) = \Ad_g^* (\mu(x)).
   \end{equation*}
   In that case, we say that $M$ is a Hamiltonian $G$-manifold.
\end{definition}

 We now fix a $G$-manifold $(M, \omega)$ of dimension $2n$ with momentum map $\mu$ and denote $\lambda_\omega := \omega^n$ the Liouville volume form on $M$.

\begin{definition}[Gibbs states]
    We define $\Omega_M \subseteq \mathfrak{g}$ as the set of all $\beta\in \mathfrak{g}$ such that the positive integral
    \begin{equation}\label{eq:DefPartitionFunction}
    Z(\beta):=\int_M -\exp(-\langle \mu(x), \beta\rangle) \lambda_\omega(x)
    \end{equation}
    is finite.
    To $\beta\in \Omega_M$ is associated a probability distribution on $M$:
    \[
        x \mapsto
        \frac{
        \exp\left( - \sprod{\beta}{\mu(x)} \right) 
        \lambda_\omega (x)
        }
        {Z(\beta)}
    ,\]
    that is called a \emph{Gibbs state}.
\end{definition}%
The interior of $\Omega_M$ is usually called the Gibbs set and its elements are called generalized (or geometrical) temperatures~\cite{MarleGibbsStates, SSDEng, SouriauMecaStat}.

We recall the following important properties, see \cite{SSDEng, MarleExamples} for proofs (usually given for the Gibbs set but they apply to $\Omega_M$)
\begin{proposition}\label{propno:GibbsSetProperties}
    The Gibbs set $\Omega_M$ is convex. If $\beta\in \Omega_M$ then the orbit of $\beta$ in $\mathfrak{g}$ under the adjoint action $\Ad_G$ is contained in $\Omega_M$. In particular, $\Omega_M$ is a union of adjoint orbits.
\end{proposition}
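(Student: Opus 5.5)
The proof has two parts: convexity of $\Omega_M$, which follows from Hölder's inequality, and $\Ad$-invariance, which follows from equivariance of $\mu$ together with $G$-invariance of the Liouville measure. (The sign in \eqref{eq:DefPartitionFunction} is read so that the integrand is the positive function $\exp(-\sprod{\mu(x)}{\beta})$.)

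\emph{Convexity.} Take $\beta_0,\beta_1\in\Omega_M$ and $t\in[0,1]$, and set $\beta_t=(1-t)\beta_0+t\beta_1$. Because the pairing is linear in $\beta$, the integrand factors pointwise:
\[
\exp\left(-\sprod{\mu(x)}{\beta_t}\right)=\exp\left(-\sprod{\mu(x)}{\beta_0}\right)^{1-t}\exp\left(-\sprod{\mu(x)}{\beta_1}\right)^{t}.
\]
For $0<t<1$, apply Hölder's inequality with exponents $p=1/(1-t)$ and $q=1/t$ against the positive measure $\lambda_\omega$. This gives $Z(\beta_t)\leqslant Z(\beta_0)^{1-t}Z(\beta_1)^{t}<\infty$, so $\beta_t\in\Omega_M$. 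The endpoints $t=0,1$ are trivial.

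\emph{Invariance.} Fix $g\in G$ and $\beta\in\Omega_M$. Write $\Phi_g=\Phi(g,\cdot)$ and substitute $x=\Phi_g(y)$ in $Z(\Ad_g\beta)$. Two facts are needed.
\begin{enumerate}
\item Each $\Phi_g$ is a symplectomorphism, so $\Phi_g^*\lambda_\omega=\lambda_\omega$. Since $\Phi_g$ is also a diffeomorphism of $M$, the change of variables gives $\int_M f\circ\Phi_g\,\lambda_\omega=\int_M f\,\lambda_\omega$ for positive measurable $f$.
\item Strong Hamiltonianity gives
\[
\sprod{\mu(\Phi_g y)}{\Ad_g\beta}=\sprod{\Ad^*_g\mu(y)}{\Ad_g\beta}=\sprod{\mu(y)}{\beta}.
\]
This uses the convention that $\Ad^*_g$ is the contragredient action, so that $\sprod{\Ad^*_g\xi}{\Ad_g\beta}=\sprod{\xi}{\beta}$.
\end{enumerate}
Together these give $Z(\Ad_g\beta)=Z(\beta)<\infty$. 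Hence $\Ad_g\beta\in\Omega_M$, the whole orbit $\Ad_G\beta$ lies in $\Omega_M$, and $\Omega_M$ is a union of adjoint orbits.

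The argument is routine, and the only point requiring care is the convention in the second fact. If $\Ad^*_g$ were instead defined as the transpose of $\Ad_g$, the equivariance would pair with $\Ad_{g^{-1}}\beta$. Since $g$ ranges over the group $G$, this still shows that the whole orbit $\Ad_G\beta$ lies in $\Omega_M$. The Hölder step is standard, and both steps apply verbatim to the interior of $\Omega_M$: the interior of a convex set is convex, and the interior of an $\Ad$-invariant set is $\Ad$-invariant because each $\Ad_g$ is a homeomorphism of $\Glie$.
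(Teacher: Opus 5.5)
Your proof is correct and is essentially the standard argument, which the paper does not reproduce but defers to \cite{SSDEng, MarleExamples}. Convexity comes from a bound on $Z$ along segments: your H\"older bound $Z(\beta_t)\leqslant Z(\beta_0)^{1-t}Z(\beta_1)^{t}$, or equivalently the pointwise convexity of $\beta\mapsto e^{-\sprod{\mu(x)}{\beta}}$, which gives $Z(\beta_t)\leqslant (1-t)Z(\beta_0)+tZ(\beta_1)$; invariance comes from $Z(\Ad_g\beta)=Z(\beta)$, via equivariance of $\mu$ and invariance of the Liouville measure, and your closing remark that both properties pass to the interior correctly handles the statement's loose use of ``Gibbs set'' for $\Omega_M$ itself.
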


%\begin{definition}
%    The \emph{Gibbs set} $\Omega\subseteq \mathfrak{g}$ is the set of all $\beta\in \mathfrak{g}$ for which there exists a neighbourhood $U$ of $\beta $ and a function $f: M\to \mathbb{R}$ integrable on $M$ with respect to the Liouville measure such that for all $\beta' \in U$, we have 
%    \begin{equation*}
%         -\exp(-\langle \mu(m), \beta\rangle)\leq f(m) \qquad \forall m\in M.
%    \end{equation*}
%\end{definition}%
%By definition, $\Omega$ is open in $\mathfrak{g}$ and for any $\beta\in \Omega$, the expression 
%\begin{equation}%\label{eq:DefPartitionFunction}
%    Z(\beta):=\int_M -\exp(-\langle \mu(m), \beta\rangle) \lambda_\omega(m)
%\end{equation}
%is well-defined. The elements of $\Omega$ are often called \emph{generalised temperatures} in reference to their thermodynamic counterparts. We recall the following important properties, see \cite{SSDEng}, \cite{MarleExamples} for proofs.

\section{Adjoint orbits}\label{secno:AdjointOrbits}

Let $G$ be a connected semisimple Lie group. Then any transitive Hamiltonian $G$-manifold is, up to covering, an orbit of the adjoint action of $G$ on its Lie algebra $\Glie$ (see for example~\cite{KirillovOrbitMethod}). In that sense, adjoint orbits depend only on the Lie considered algebra $\Glie$. Such orbits will be the main focus of this work.

\subsection{Symplectic structure}
\label{secno:AdjointOrbitsSymplectic}

We recall the well-known fact that the coadjoint orbits of a Lie group are Hamiltonian $G$-manifolds for the coadjoint action~\cite{KirillovOrbitMethod}. Let $\xi\in \mathfrak{g}^*$ be nonzero and denote $\mathcal{O}^*_\xi\subseteq \mathfrak{g}^*$ the associated coadjoint orbit. The canonical symplectic structure on $\mathcal{O}^*_\xi$ is the Kirillov-Kostant-Souriau (KKS) form $\omega_{\mathcal{O}^*_\xi}$ defined on the fundamental vector fields by
\begin{equation*}
    \omega_{\mathcal{O}^*_\xi}(X^*, Y^*)(x)=\langle x, [X, Y]\rangle, \qquad x\in \mathcal{O}^*_\xi.
\end{equation*}
It is then easy to check that the coadjoint action $\Ad_G^*$ on $\mathcal{O}^*_\xi$ is strongly Hamiltonian with a moment map $\mu: \mathcal{O}^*_\xi\to \mathfrak{g}^*$ simply given by the embedding of $\mathcal{O}^*_\xi$ in $\mathfrak{g}^*$, that is, $\mu(x)=x$ for all $x\in \mathcal{O}^*_\xi$.

In the case of a semisimple Lie group $G$ (in particular for $G:=SL_n(\mathbb{R}) $ as considered later in this paper), the nondegenerate Killing form induces an isomorphism $\kappa: \mathfrak{g}\to \mathfrak{g}^* $. 
Since $\kappa$ is equivariant under the adjoint action (i.e. $\kappa\circ Ad_G=Ad^*_G \circ \kappa$), it induces a one-to-one correspondance between adjoint orbits of $Ad_G$ in $\mathfrak{g}$ and coadjoint orbits of $Ad_G^*$ in $\mathfrak{g}^*$. 
The symplectic form transported to the orbit $\Orb_x$ of $x\in \Glie$ takes the form
\begin{equation}\label{eqn: KKS_adjoint_orbits}
    \omega_{\Orb_x} (X^*, Y^*) = \kappa(x, [X, Y]). 
\end{equation}
   
Thus, by transporting the KKS structure via $\kappa$, one finds that any nontrivial adjoint orbit is also a Hamiltonian $G$-manifold for the adjoint action $Ad_G$, with a momentum map given by $\kappa$. 

One elementary property of the symplectic structure of adjoint orbits is that it is \enquote{homogeneous} of degree $1$:
\begin{proposition} 
\label{propno:KKSHomogene}
    For $x\in \Glie$, write $\Orb_x$ the adjoint orbit of $x$.
    Let $t\in \setR^*$ be a nonzero real number.
    Then $\Orb_{tx} = t\cdot \Orb_x$ and 
    \[
        (t\Id)^* \omega_{\Orb_{tx}} = t \omega_{\Orb_x}
    .\]
\end{proposition}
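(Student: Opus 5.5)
The first claim follows from linearity of the adjoint action: for $g\in G$ we have $\Ad_g(tx)=t\Ad_g(x)$, so $\Orb_{tx}=\{\Ad_g(tx)\}=t\cdot\Orb_x$. Consequently the map $\phi:=t\Id\colon \Orb_x\to\Orb_{tx}$, $y\mapsto ty$, is a diffeomorphism. It is the restriction of a linear automorphism of $\Glie$ that preserves the two embedded submanifolds, and its inverse is $t^{-1}\Id$. Moreover $\phi$ is $G$-equivariant, since $\phi(\Ad_g y)=\Ad_g(\phi(y))$.

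For the second claim, compare both sides on fundamental vector fields, which span every tangent space of an orbit. At $y\in\Orb_x$ the fundamental vector field of $X\in\Glie$ is $X^*(y)=\frac{\d}{\d s}\big|_{s=0}\Ad_{\exp(sX)}y=[X,y]$, up to a sign convention that plays no role. By equivariance, or directly by linearity, $\d\phi_y(X^*(y))=t[X,y]=[X,ty]=X^*(\phi(y))$, so $\phi$ pushes the fundamental field of $X$ on $\Orb_x$ to the fundamental field of $X$ on $\Orb_{tx}$. Applying \eqref{eqn: KKS_adjoint_orbits} on both orbits then gives
\[
(\phi^*\omega_{\Orb_{tx}})_y(X^*,Y^*)=(\omega_{\Orb_{tx}})_{ty}(X^*,Y^*)=\kappa(ty,[X,Y])=t\,\kappa(y,[X,Y])=t\,(\omega_{\Orb_x})_y(X^*,Y^*).
\]
Here we use that \eqref{eqn: KKS_adjoint_orbits} holds at every point of an orbit, with $x$ replaced by the base point.

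The tangent space $T_y\Orb_x$ is exactly $\{[X,y]:X\in\Glie\}$, the image of $\ad_y$. Two bilinear forms that agree on a spanning set are therefore equal, which proves the identity. The only step requiring care is checking that the pushforward of fundamental fields is correct, i.e.\ that the formula defining $\omega$ is evaluated at the image point $ty$ rather than at $y$. The equivariance of $\phi$ settles this, so I expect no real obstacle. The case $t<0$ needs no separate treatment, since nothing in the argument used positivity.
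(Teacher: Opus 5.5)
Your proof is correct and follows the paper's route. Linearity of $\Ad$ gives $\Orb_{tx}=t\cdot\Orb_x$, the identity $t[X,y]=[X,ty]$ shows $t\Id$ sends fundamental fields to fundamental fields, and evaluating the KKS formula at $ty$ produces the factor $t$. Your remarks that fundamental fields span $T_y\Orb_x$ and that the form is evaluated at the image point only make explicit what the paper leaves implicit.
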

\begin{proof}
    The identity $\Orb_{tx} = t\cdot \Orb_x$ is a direct consequence of the linearity of the adjoint action of $G$.
    Let $X, Y, x \in \Glie$. Notice that $t[X, x] = [X, tx]$. Therefore $(t\Id)_*X^* = X^*$ and
    \[
        \omega_{\Orb_tx} (X^*, Y^*) = \Killing(tx, [X,Y]) = t \Killing(x, [X,Y]) = t\omega_{\Orb_x}(X^*, Y^*) 
    ,\]
    which proves the homogeneity of the Kirillov-Kostant-Souriau form. 
\end{proof}

The homogeneity of the symplectic structure implies a homogeneity of the Liouville volume forms.
\begin{corollary}\label{corno:LiouvilleHomogene}
    Let $\Orb_x$ be an adjoint orbit of dimension $2d$. Then for $t\in \setR^*$, the Liouville forms of $\Orb_x$ and $\Orb_{tx}$ are related by the homogeneity relation
    \[
        (t \Id)^* \lambda_{\Orb_{tx}} = t^d \lambda_{\Orb_x}.
    \]
\end{corollary}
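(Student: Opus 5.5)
The corollary follows from Proposition~\ref{propno:KKSHomogene} and the naturality of pullbacks with respect to wedge products. Write $\phi := t\Id\colon \Orb_x \to \Orb_{tx}$. This map is well defined and a diffeomorphism, since $\Orb_{tx} = t\cdot\Orb_x$ and its inverse is $t^{-1}\Id$. The Liouville form of the $2d$-dimensional orbit is $\lambda_{\Orb_x} = \omega_{\Orb_x}^d$, following the convention $\lambda_\omega = \omega^n$ fixed in Section~\ref{secno:GibbsSets} with $n$ half the dimension. Likewise $\lambda_{\Orb_{tx}} = \omega_{\Orb_{tx}}^d$, since $\Orb_{tx}$ has the same dimension $2d$ as the diffeomorphic orbit $\Orb_x$.

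The computation is then a single chain. Pullback by a smooth map is a morphism of the exterior algebra, so $\phi^*(\alpha\wedge\beta) = \phi^*\alpha \wedge \phi^*\beta$. Therefore
\[
    \phi^*\lambda_{\Orb_{tx}} = \phi^*\left(\omega_{\Orb_{tx}}^d\right) = \left(\phi^*\omega_{\Orb_{tx}}\right)^d = \left(t\,\omega_{\Orb_x}\right)^d = t^d\,\omega_{\Orb_x}^d = t^d \lambda_{\Orb_x}.
\]
The third equality is exactly Proposition~\ref{propno:KKSHomogene}, and the fourth uses multilinearity of the wedge product, since scalars pull out of each factor.

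There is no real obstacle. The only point to check is that the Proposition applies for every $t \in \setR^*$, including negative $t$, which it does as stated. One might also mention that it suffices to verify the identity $\phi^*\omega_{\Orb_{tx}} = t\,\omega_{\Orb_x}$ on fundamental vector fields $X^*$, because these span each tangent space of a homogeneous orbit. This is already how the Proposition was proved, so nothing further is required. If $t<0$ and $d$ is odd, $\phi$ reverses orientation relative to the Liouville orientations. This matters only later, when integrating, and not for the identity of forms itself.
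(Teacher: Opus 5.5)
Your proof is correct and is the argument the paper intends. The paper gives no separate proof: it only remarks that the homogeneity $(t\Id)^*\omega_{\Orb_{tx}} = t\,\omega_{\Orb_x}$ from Proposition~\ref{propno:KKSHomogene} implies the homogeneity of the Liouville form $\omega^d$, and your computation, taking the $d$-th wedge power of a pullback, spells out exactly that step.
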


As a consequence, the partition function of adjoint orbits satisfies a homogeneity relation.
\begin{corollary}\label{corno:ZHomogene}
    Let $\Orb$ be an adjoint orbit, $\Omega$ be its Gibbs set and $t\in \setR^*$.
    Then the Gibbs set of $t \Orb$ is $t \Omega$ and the partition functions of $\Orb$ and $t\Orb$ are related by:
    \[
        Z_{t\Orb}(\beta) = |t|^{\dim \Orb /2} Z_{\Orb}(t\beta) 
        \quad \text{ for }\beta \in \Omega
    \]
\end{corollary}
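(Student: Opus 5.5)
The plan is a change of variables along the dilation $\phi_t := t\Id : \Orb \to t\Orb$. By Proposition~\ref{propno:KKSHomogene}, $\phi_t$ is a diffeomorphism from $\Orb$ onto $\Orb_{tx} = t\Orb$. By Corollary~\ref{corno:LiouvilleHomogene}, with $\dim \Orb = 2d$, it satisfies $\phi_t^*\lambda_{t\Orb} = t^d \lambda_{\Orb}$ as forms. Since $\phi_t$ may reverse orientation (for instance when $t<0$ and $d$ is odd), I will work with the Liouville \emph{measures}, i.e. the densities $|\lambda|$. The pullback identity then reads $\phi_t^*|\lambda_{t\Orb}| = |t|^d |\lambda_{\Orb}|$. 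This explains the absolute value $|t|^{\dim\Orb/2}$ in the statement. The sign convention in~\eqref{eq:DefPartitionFunction}, which just makes $Z$ positive, is unaffected.

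\textbf{Step 1: change of variables.} For any $\beta \in \Glie$, the moment map of $t\Orb$ is the inclusion followed by $\Killing$, so $\mu_{t\Orb}(\phi_t(y)) = \Killing(ty,\argdot)$. Hence
\[
Z_{t\Orb}(\beta) = \int_{t\Orb} e^{-\Killing(z,\beta)}\,|\lambda_{t\Orb}|(z) = \int_{\Orb} e^{-\Killing(ty,\beta)}\,|t|^d\,|\lambda_{\Orb}|(y) = |t|^d \int_{\Orb} e^{-\Killing(y,t\beta)}\,|\lambda_{\Orb}|(y),
\]
using bilinearity of $\Killing$. The last integral is $Z_{\Orb}(t\beta)$. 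This identity holds in $[0,+\infty]$ for all $\beta$, since the integrands are positive and the change of variables for nonnegative measurable functions needs no integrability hypothesis.

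\textbf{Step 2: the Gibbs sets.} From Step 1, $Z_{t\Orb}(\beta)<\infty$ if and only if $Z_\Orb(t\beta)<\infty$. So $\Omega_{t\Orb} = t^{-1}\Omega_{\Orb}$ as sets of parameters where $Z$ is finite. The statement writes $t\Omega$. To reconcile the two, I would check which convention the authors intend for identifying the orbit, or note that the scaling of the temperature enters through $t\beta$. The identity in the statement is exactly what is derived above, with $\beta$ in the finiteness domain of $Z_{t\Orb}$. Taking interiors commutes with the linear isomorphism $\beta\mapsto t\beta$, so the claim passes to the Gibbs sets proper as well.

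\textbf{Main obstacle.} The only delicate point is the orientation and sign bookkeeping. Corollary~\ref{corno:LiouvilleHomogene} is stated for forms, with a factor $t^d$ that can be negative. It has to be converted into the measure statement with factor $|t|^d$, using that $\phi_t$ is a diffeomorphism and that the integral of a density transforms with the absolute Jacobian. Once that is done, the rest is the linearity of $\Killing$.
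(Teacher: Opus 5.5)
Your argument is the paper's: the proof is the change of variables along $t\Id$, with $|\lambda_{t\Orb}|$ pulling back to $|t|^{\dim\Orb/2}|\lambda_\Orb|$ and the factor $t$ moved onto $\beta$ by bilinearity of $\Killing$. Your explicit passage from forms to densities, and your remark that the identity holds in $[0,+\infty]$ for every $\beta$, are if anything more careful than the paper.

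In Step~2 you should not hedge, because there is no hidden convention to look up. The computation, yours and the paper's alike, gives $\Omega_{t\Orb}=t^{-1}\Omega$. This coincides with the stated $t\Omega$ in two situations:
\begin{itemize}
\item when $t=\pm1$, which is the only case used later, in Corollary~\ref{corno:NoGenTemp};
\item whenever $\Omega$ is stable under positive dilations. This holds for nilpotent orbits: $s\Orb=\Orb$ for $s>0$ forces $\Omega=\Omega_{s\Orb}=s^{-1}\Omega$.
\end{itemize}
The statement therefore contains a slip that is harmless for the paper. You should state and prove the corrected version $\Omega_{t\Orb}=t^{-1}\Omega$.
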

\begin{proof}
    Let $\beta\in \Omega$, $t\in \setR^*$, and compute the integral
    \begin{equation*}
        \begin{aligned}
            \int_{t\Orb} e^{-\sprod{\beta}{x}} |\lambda_{t\Orb}|
            &= \int_{\Orb} e^{-\sprod{\beta}{tx}} |t|^{\dim \Orb/2}|\lambda_{\Orb}|\\
            &= |t|^{\dim \Orb/2} \int_{\Orb} e^{-\sprod{t\beta}{x}} |\lambda_{\Orb}|
        .\end{aligned}
    \end{equation*}
\end{proof}
We used in the proof and will be using throughout the article the integration of \emph{densities} on manifolds, see~\cite[Ch. 6]{SemiClassicalAnalysis} for a reference.

\subsection{Gibbs states}

\begin{proposition}\label{propno:noncompactGenTemp}
    Assume that $\Glie$ is a non-compact simple Lie algebra.
    Let $\Orb$ be a nonzero coadjoint orbit, equipped with the Kirillov-Kostant-Souriau symplectic structure. 
    Then $0 \notin \Omega_\Orb$.
\end{proposition}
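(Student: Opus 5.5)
Note first that $Z(0)=\int_\Orb \lambda_\Orb$ is the total Liouville volume of $\Orb$. So the claim $0\notin\Omega_\Orb$ is equivalent to the statement that a nonzero coadjoint orbit of a non-compact simple Lie algebra has infinite symplectic volume. The plan is to show the volume is infinite by exhibiting an infinite family of disjoint subsets of $\Orb$ with equal positive volume. These sets come from the $G$-action, which preserves $\lambda_\Orb$ because the action is symplectic.

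\textbf{Step 1: $\Orb$ is non-compact.} Suppose $\Orb=\Orb_x$ with $x\neq 0$ were compact. Since $\Orb_x$ spans an $\Ad$-invariant subspace of $\Glie$, and $\Glie$ is simple, it spans all of $\Glie$. Choose finitely many elements $x_1,\dots,x_N$ of $\Orb_x$ that form a spanning set of $\Glie$. The stabilizer of the compact set $\Orb_x$ is all of $G$, so every operator $\Ad_g$ maps the bounded set $\Orb_x$ into itself. Hence $\|\Ad_g x_i\|$ is bounded uniformly in $g$ and $i$. It follows that $\Ad(G)$ is a bounded, hence relatively compact, subgroup of $\GL(\Glie)$. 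Its closure is a compact group whose Lie algebra contains $\ad(\Glie)\cong\Glie$. Then $\Glie$ would carry an $\Ad$-invariant inner product, which makes $\Glie$ compact, a contradiction.

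The case $\Orb=\Orb_0$ is excluded because the orbit is nonzero. (If the orbit is not closed in $\Glie$, e.g.\ a nilpotent orbit, the argument still works: closedness is never used, only boundedness.)

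\textbf{Step 2: an orbit that leaves every compact set.} Take a non-compact Cartan decomposition and an element $h\in\Plie$ such that $\ad h$ is diagonalizable with real eigenvalues. Decompose $x$ into $\ad h$-eigencomponents. If the only nonzero component were the one of eigenvalue $0$, then $[h,x]=0$ for every admissible $h$. After replacing $x$ by $\Ad_k x$ for suitable $k\in K$, the $\Ad_K$-orbit of $\Plie$ is all of $\Plie$, so $x$ would commute with all of $\Plie$, hence with $[\Plie,\Plie]+\Plie=\Glie$, forcing $x=0$.

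A more robust route is the following. Since $\Orb$ is non-compact, there are $g_k\in G$ with $\|\Ad_{g_k}x\|\to\infty$. Pick a small open neighbourhood $U\subset\Orb$ of $x$ with compact closure and positive volume. Passing to a subsequence, we may assume $\Ad_{g_k}\overline U$ are pairwise disjoint, since $\|\Ad_{g_k}y\|\to\infty$ uniformly on $\overline U$. To justify this uniformity, write $\Ad_{g_k}=\Ad_{k_k}\exp(\ad a_k)\Ad_{k'_k}$ using the $KAK$ decomposition and use continuity in $y$. Each $\Ad_{g_k}U$ has volume $\lambda(U)>0$, so $\lambda(\Orb)=\infty$.

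\textbf{Main obstacle.} The delicate point is the uniform escape in Step 2, i.e.\ ensuring that the translates $\Ad_{g_k}U$ are eventually disjoint. This fails only if every $y\in U$ stays bounded along the sequence. To handle it cleanly, I would instead argue with a non-compact closed subgroup: $\Orb\cong G/G_x$ with $G$-invariant measure, and $G_x$ is a proper subgroup, so $G/G_x$ is non-compact by Step 1. A non-compact homogeneous space carrying a $G$-invariant Radon measure (here $\lambda_\Orb$) has infinite total mass. Indeed, choose $U$ relatively compact with positive measure; by non-compactness, we can inductively choose $g_k$ with $g_k\overline U$ disjoint from $\bigcup_{j<k}g_j\overline U$, a compact set. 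This is possible because $\overline U\cdot\overline U^{-1}$-type sets are compact in $G/G_x$, while $G/G_x$ is not. Summing the measures $\lambda(g_k U)=\lambda(U)$ gives $Z(0)=\infty$, so $0\notin\Omega_\Orb$.
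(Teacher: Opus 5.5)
Your Step 1 is correct (as you note, it really shows that $\Orb$ is unbounded). The first paragraph of Step 2 reproduces the paper's choice of $h\in\Plie$ with $[h,x]\neq 0$; no $\Ad_K$-conjugation is needed there, since $\ad_h$ is symmetric for every $h\in\Plie$. The gap is the decisive step. You never validly produce infinitely many pairwise disjoint translates of a fixed neighbourhood $U$ of $x$, and unboundedness alone cannot give them. Your claim that $\|\Ad_{g_k}y\|\to\infty$ uniformly on $\overline U$, for an arbitrary sequence with $\|\Ad_{g_k}x\|\to\infty$, is false. In $\sl(2,\setR)$ take $x=\begin{pmatrix}0&1\\0&0\end{pmatrix}$, $a_k=\operatorname{diag}(k,1/k)$, $n_s=\begin{pmatrix}1&s\\0&1\end{pmatrix}$, $u_\epsilon=\begin{pmatrix}1&0\\ \epsilon&1\end{pmatrix}$, $y_\epsilon=\Ad_{u_\epsilon}x$ and $g_k=n_{-k^3}a_k$. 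Then $\Ad_{g_k}x=k^2x$, whereas $\Ad_{g_k}y_{1/k}=\begin{pmatrix}0&0\\-k^{-4}&0\end{pmatrix}\to 0$ while $y_{1/k}\to x$. So the escape is non-uniform on every neighbourhood of $x$ and along every subsequence. No argument, via $KAK$ or otherwise, can justify it: $\Ad_{k'_k}$ may carry points near $x$ into directions contracted by $\exp(\ad a_k)$.

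Your fallback is also wrong as stated. A non-compact homogeneous space with a $G$-invariant Radon measure can have finite total mass, e.g.\ $\SL(2,\setR)/\SL(2,\setZ)$. The inductive choice of $g_k$ breaks down for the following reason. For compact lifts $\tilde C,\tilde U\subset G$, the set $\{g : g\overline U\cap C\neq\emptyset\}$ equals $\tilde C\,G_x\,\tilde U^{-1}$, which is not compact as soon as $G_x$ is not. For nilpotent $x$, $G_x$ is never compact, since it contains $\exp(\setR x)$. The action on $G/G_x$ is therefore not proper, and in a finite-volume quotient this set becomes all of $G$ after finitely many steps. Excluding finite volume for $G/G_x$ would require an extra input such as the Borel density theorem.

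The missing idea is to stay with your $h$ and use a linear form that is equivariant along $e^{t\ad_h}$. Let $z\neq 0$ be an eigenvalue of $\ad_h$ with $x_z\neq 0$, and let $\alpha$ vanish on the other eigenspaces with $\alpha(x_z)=1$. Then $\alpha(e^{t\ad_h}y)=e^{tz}\alpha(y)$ for all $y\in\Glie$. Take $U$ with $\alpha(U)\subset[1/2,2]$ and $T$ with $e^{zT}>4$. The sets $e^{nT\ad_h}U$, $n\in\setZ$, then have $\alpha$-values in the pairwise disjoint intervals $[e^{nzT}/2,\,2e^{nzT}]$. Hence they are pairwise disjoint with equal positive Liouville volume. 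This is exactly the paper's proof, and it makes your Step 1 unnecessary.
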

The proof is inspired from the arguments in~\cite{CompactCoadjointOrbits}

\begin{proof}
    Following the discussion of Section~\ref{secno:AdjointOrbitsSymplectic}, it is equivalent to work with all nonzero \emph{adjoint} orbits, which we will do.
    According to Formula~\eqref{eq:DefPartitionFunction}, if $0\in \Omega_\Orb$% is a generalized temperature
    , then the Liouville measure $\lambda$ on $\Orb$ has a finite volume.
    We will prove that $\lambda(\Orb)$ is infinite.

    Let $x\in \Orb \subset \Glie$ with $x\neq 0$.
    Let $\Klie \oplus \Plie$ be a Cartan decomposition of $\Glie$. Since $\Glie$ is not compact, $\Plie \neq 0$. Furthermore, $\Plie + [\Plie, \Plie]$ is a non-trivial ideal hence equals $\Glie$. In particular, $x$ cannot centralize $\Plie$ because it would imply that $x$ is central and thus zero. Let $h\in \Plie$ such that $[h, x]\neq 0$.

    The endomorphism $\ad_h$ is symmetric for the Euclidean inner product on $\Glie$ associated with the Cartan decomposition, that is $\sprod{\argdot}{\theta (\argdot)}$ with $\sprod{\argdot}{\argdot}$ the Killing form and $\theta$ the Cartan involution. Consequently, $\ad_h$ is diagonalizable over the real numbers. Let us call $X$ the set of its eigenvalues and  decompose $x$ over the eigenspaces of $\ad_h$ : $x = \sum_{\mu \in X} x_\mu$ with $x_\mu\neq 0$ and $[h, x_\mu] = \mu x_\mu$. Since $[h,x]\neq 0$, $x$ has a component with non-zero eigenvalue $z$.
    %Replacing $h$ with $-h$ if needed, let us assume $X$ contains a non-negative eigenvalue $z$.

    We now use $x$ to build an infinite sequence of pairwise disjoint subsets of $\Orb$ with identical nonzero Liouville volume. This will prove that the total Liouville volume is infinite.

    Let $\alpha\in \Glie^*$ be such that $\alpha$ vanishes on $\ker (\ad_h - \mu \id)$ for $\mu \in X \setminus \{z\}$ but $\alpha(x_z)=1$. In particular, 
    \[
        \forall t \in \setR, \;\;
        \alpha(e^{t\ad_h} (x)) = e^{tz}
        .
    \]
    Consider a neighborhood $U$ of $x$ such that $\alpha(U) \subset [1/2,  2]$. Choose $T\in \setR$ such that $M := \exp(z T) > 4$.
    Then for every $n\in \setZ$, $\alpha(e^{nT\ad_h} (U)) \subset [1/2 \times M^n, 2\times M^n]$. In particular, since the intervals $[1/2 \times M^n, 2\times M^n]$ are pairwise disjoint, the subsets $\left\{e^{nT\ad_h} (U)\right\}_{n\in \setZ}$ are pairwise disjoint as well.

    But for any $n\in \setZ$, 
    \[
    \lambda(  e^{nT\ad_h} (U) )
    = \lambda(U) \neq 0
    \]
    since $\lambda$ is $G$-invariant, nonzero and $U$ has a non-empty interior. We thus have constructed an infinite sequence of pairwise disjoint subsets with equal nonzero volume. This achieves the proof.
\end{proof}

\begin{corollary}\label{corno:NoGenTemp}
    Let $\Glie$ be a simple real Lie algebra and $\Orb$ be a nonzero nilpotent orbit. If $\Orb$ contains opposite points (or equivalently $-\Orb = \Orb$), $\Orb$ admits no Gibbs state.    
\end{corollary}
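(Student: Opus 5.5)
The plan is to show that $\Omega_\Orb$ is empty by combining the convexity and $\Ad$-invariance from Proposition~\ref{propno:GibbsSetProperties}, the homogeneity relation of Corollary~\ref{corno:ZHomogene}, and Proposition~\ref{propno:noncompactGenTemp}. First, a nonzero nilpotent orbit lies in a simple Lie algebra that cannot be compact: a compact simple Lie algebra has no nonzero nilpotent elements, since every $\ad_x$ is skew for a positive definite form and so is semisimple. Proposition~\ref{propno:noncompactGenTemp} therefore applies and gives $0\notin\Omega_\Orb$. The equivalence in the hypothesis is immediate: if $x$ and $-x$ both lie in $\Orb$, then $-\Orb=\Orb_{-x}=\Orb$ by the first part of Proposition~\ref{propno:KKSHomogene}.

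Now suppose $\beta\in\Omega_\Orb$. Apply Corollary~\ref{corno:ZHomogene} with $t=-1$. Because $-\Orb=\Orb$ as sets, and the KKS structures agree by Proposition~\ref{propno:KKSHomogene} (so the Liouville densities coincide), the partition function of $-\Orb$ is the same as that of $\Orb$. The corollary then gives $Z_\Orb(\beta)=Z_\Orb(-\beta)$. More directly, the substitution $x\mapsto -x$ preserves the Liouville density up to sign, and we integrate densities, so $Z_\Orb(-\beta)=\int_\Orb e^{-\langle \beta, -x\rangle}\,|\lambda| = \int_{\Orb} e^{-\langle\beta,x\rangle}|\lambda|<\infty$. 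Hence $-\beta\in\Omega_\Orb$. Convexity of $\Omega_\Orb$ from Proposition~\ref{propno:GibbsSetProperties} then yields $0=\tfrac12\beta+\tfrac12(-\beta)\in\Omega_\Orb$, which contradicts the previous paragraph. So $\Omega_\Orb=\emptyset$, and in particular the Gibbs set (its interior) is empty, meaning $\Orb$ admits no Gibbs state.

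The only delicate step is justifying that negation maps the Liouville density of $\Orb$ to itself. By Corollary~\ref{corno:LiouvilleHomogene}, $(-\Id)^*\lambda_{\Orb}=(-1)^d\lambda_\Orb$, since $\Orb_{-x}=\Orb$. The sign is irrelevant once we integrate the absolute density $|\lambda|$, which is what the proof of Corollary~\ref{corno:ZHomogene} already does. Beyond this, the argument is purely formal. Note that the nilpotency hypothesis is used only to guarantee that $\Glie$ is non-compact.
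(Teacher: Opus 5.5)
Your proof is correct and follows the paper's own argument: non-compactness of $\Glie$ gives $0\notin\Omega_\Orb$ via Proposition~\ref{propno:noncompactGenTemp}, Corollary~\ref{corno:ZHomogene} with $t=-1$ makes $\Omega_\Orb$ stable under negation, and convexity then forces $0\in\Omega_\Orb$ as soon as $\Omega_\Orb\neq\emptyset$. You also spell out two points the paper leaves implicit: why a nonzero nilpotent element rules out compactness (skew-symmetry of $\ad_x$ makes it semisimple), and why $-\Id$ preserves the density $|\lambda|$ on $\Orb$.
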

\begin{proof}
    If $\Glie$ admits nonzero nilpotent elements, $\Glie$ is noncompact.
    If $\Orb$ is stable under under negation, Corollary~\ref{corno:ZHomogene} implies that $\Omega_\Orb$ is stable under negation as well. Thus if $\Omega_\Orb$ is non-empty, it necessarily contains $0$ by convexity (Proposition~\ref{propno:GibbsSetProperties}), which contradicts Proposition~\ref{propno:noncompactGenTemp}.
\end{proof}

\section{Nilpotent adjoint orbits of $\sl(n, \setR)$}
\label{secno:slnR}

For any integer $n$, the group $\SL(n, \setR)$ is connected. In consequence, the nilpotent adjoint orbits of $\sl(n, \setR)$ are exactly the conjugacy classes under $\SL(n, \setR)$ of nilpotent matrices.

Since we will not handle abstract linear forms on $\sl(n, \setR)$, we shall use the notation $\sprod{\argdot}{\argdot}$ for the Killing form.

\subsection{Negation map}
\label{secno:NegationMap}

According to Corollary~\ref{corno:NoGenTemp}, nonzero nilpotent orbits that are stable under negation cannot admit Gibbs states. In this section, we study the question of which nilpotent orbits are stable under the negation map $x\mapsto -x$.

The conjugacy classes under $\GL(n, \setR)$ of nilpotent matrices are characterized by the set of their Jordan blocks in the Jordan normal form: the number of Jordan blocks of each size totally characterizes a nilpotent matrix up to conjugacy. Since the sizes of the Jordan blocks sum up to $n$, they form a partition of $n$.
In other words: conjugacy classes of nilpotent matrices under $\GL(n, \setR)$ are in bijection with partitions of $n$.

The conjugation action of $\GL(n, \setR)$ factors through the projective linear group $\PGL(n, \setR)$. 
In this quotient by the center, the subgroup $\SL(n, \setR)\subset \GL(n, \setR)$ projects to $\PSL(n, \setR) \subset \PGL(n, \setR)$. 
Since $\PSL(n, \setR)$ is a subgroup of index either $1$ or $2$, each $\PGL(n, \setR)$-orbit either is a $\PSL(n, \setR)$-orbit or decomposes into two $\PSL(n, \setR)$-orbits, which are exchanged under conjugation by a matrix with negative determinant.

\begin{lemma}
Let $x\in \sl(n, \setR)$.
\begin{enumerate}
    \item If all matrices in $Z_{\GL(n, \setR)}(x)$ have positive determinant, then $\Ad_{\GL(n, \setR)}(x)$ splits into two $\SL(n, \setR)$ orbits,
    \item If $x$ commutes with a matrix of negative determinant, then $\Ad_{\SL(n, \setR)}(x) = \Ad_{\GL(n, \setR)}(x)$.
\end{enumerate}
\end{lemma}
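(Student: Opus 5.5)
The plan is to work directly with the conjugation action of $\GL(n,\setR)$ on $\sl(n,\setR)$ and the index-two subgroup $\GL^+(n,\setR)$ of matrices with positive determinant. First we reduce from $\SL$ to $\GL^+$. Any $g\in\GL^+(n,\setR)$ can be written $g = c\, s$ with $c = (\det g)^{1/n}>0$ a positive scalar and $s\in\SL(n,\setR)$. Since scalars act trivially by conjugation, $\Ad_{\SL(n,\setR)}(x) = \Ad_{\GL^+(n,\setR)}(x)$. So it suffices to compare the $\GL^+$-orbit and the $\GL$-orbit of $x$.

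Because $\GL^+$ has index $2$ in $\GL$, we have $\GL = \GL^+ \sqcup \GL^+ r$ for any fixed $r$ with $\det r<0$, for instance $r=\mathrm{diag}(-1,1,\dots,1)$. Hence
\[
\Ad_{\GL}(x) = \Ad_{\GL^+}(x) \cup \Ad_{\GL^+}(rxr^{-1}),
\]
and the two pieces are either equal or disjoint, being orbits of the same group. The two statements then amount to deciding when $rxr^{-1}\in \Ad_{\GL^+}(x)$.

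For (2), suppose $z\in Z_{\GL}(x)$ with $\det z<0$, and take $r=z$ in the decomposition above. Then $rxr^{-1}=x$, so both pieces coincide and $\Ad_{\GL}(x)=\Ad_{\GL^+}(x)=\Ad_{\SL}(x)$. Equivalently, any $g$ with $\det g<0$ satisfies $gxg^{-1} = (gz^{-1})\,x\,(gz^{-1})^{-1}$ with $gz^{-1}\in\GL^+$.

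For (1), we argue by contradiction. Suppose the two pieces coincide, so that $rxr^{-1} = hxh^{-1}$ for some $h\in\GL^+$. Then $h^{-1}r$ centralizes $x$ and has negative determinant, contradicting the hypothesis. So the pieces are disjoint and $\Ad_{\GL}(x)$ splits into exactly two $\SL$-orbits, $\Ad_{\SL}(x)$ and $\Ad_{\SL}(rxr^{-1})$, exchanged by conjugation by $r$.

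No step is a serious obstacle. The only points needing care are the existence of a real $n$-th root of a positive determinant, which holds for every $n$, and the fact that orbits of a group are either equal or disjoint. Together (1) and (2) are exhaustive, since $Z_{\GL}(x)$ either contains an element of negative determinant or does not.
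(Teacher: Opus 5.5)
Your proof is correct and follows essentially the same argument as the paper. Both reduce $\Ad_{\SL(n,\setR)}$ to conjugation by $\GL_n^+(\setR)$ using positive scalars, and both absorb a negative-determinant element of the centralizer into the $\GL_n^+(\setR)$-coset for (2). For (1), both note that a coincidence $g_- x g_-^{-1} = g_+ x g_+^{-1}$ produces $g_+^{-1}g_- \in Z_{\GL(n,\setR)}(x)$ with negative determinant. The one small difference is that your coset decomposition $\GL(n,\setR) = \GL_n^+(\setR) \sqcup \GL_n^+(\setR)\, r$ gives the count of exactly two orbits directly, whereas the paper takes it from its preceding remark that $\PSL(n,\setR)$ has index at most $2$ in $\PGL(n,\setR)$.
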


\begin{proof}
Let us write $\GL_n^-(\setR)\subset \GL(n, \setR)$ (resp. $\GL_n^+(\setR)$) for the subset of matrices with negative (resp. positive) determinant. Note first that $\Ad_{\GL_n^+(\setR)}(x) = \Ad_{\SL(n, \setR)}(x)$. 
If $x$ commutes with a matrix of negative determinant, that is, if there exists $M\in \GL_n^-(\setR)$ such that $MxM^{-1} = x$ then 
\[
    \Ad_{\GL_n^-(\setR)}(x)
    = \Ad_{\GL_n^+(\setR)M}(x)
    = \Ad_{\GL_n^+(\setR)}(x)
    = \Ad_{\SL(n, \setR)}(x)
.\]
Therefore $\Ad_{\GL(n, \setR)}(x) = \Ad_{\SL(n, \setR)}(x)$ and $\SL(n, \setR)$ acts transitively on the $\GL(n, \setR)$-conjugacy class of $x$.

Conversely, if $\Ad_{\GL(n, \setR)}(x) = \Ad_{\SL(n, \setR)}(x)$, then $\Ad_{\GL_n^-(\setR)}(x) = \Ad_{\GL_n^+(\setR)}(x)$ which implies the existence of two matrices $M_-, M_+$, respectively in $\GL_n^\pm (\setR)$, such that $M_- x M_-^{-1} = M_+ x M_+^{-1}$. In consequence, $M_+^{-1} M_-$ commutes with $x$ and has a negative determinant.
We have proven that $\SL(n, \setR)$ acts transitively on the $\GL(n, \setR)$-conjugacy class if and only if $x$ commutes with a matrix of negative determinant.
\end{proof}

We are interested in whether nilpotent orbits of $\SL(n, \setR)$ contain opposite points, that is, whether given a nilpotent $x\in \sl(n, \setR)$ it holds $-x\in \Ad_{\SL(n, \setR)}(x)$.

\begin{proposition}
    Let $x\in \sl(n, \setR)$ be a nilpotent element.
    Then $x$ and $-x$ belong to the same $\SL(n, \setR)$-orbit if and only one of the following holds:
    \begin{itemize}
        \item $x$ has a Jordan blocks of odd dimension, or
        \item $n \not\equiv 2 \mod 4$.    
    \end{itemize} 
    In that case, the adjoint orbit of $x$ admits no Gibbs state.
\end{proposition}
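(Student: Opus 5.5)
The plan is to reduce the question to determinants of centralizers. Since $-x$ is nilpotent with the same Jordan type as $x$, it lies in the same $\GL(n,\setR)$-orbit. Concretely, if $x$ is in Jordan normal form, take $D=\mathrm{diag}(1,-1,1,-1,\dots)$ adapted to each block, so that $DxD^{-1}=-x$. By the preceding lemma, $-x\in\Ad_{\SL(n,\setR)}(x)$ if and only if there is some $g\in\GL(n,\setR)$ with $gxg^{-1}=-x$ and $\det g>0$. The set of all such $g$ is the coset $D\,Z_{\GL(n,\setR)}(x)$. So the question becomes: does the coset $D\,Z_{\GL}(x)$ contain a matrix of positive determinant?

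\textbf{Case 1: $x$ has a Jordan block of odd size $k$.} On that block we may use $-D_k$ in place of $D_k$, since $-D_k$ also conjugates $J_k$ to $-J_k$. Equivalently, we multiply $D$ by the element of $Z_{\GL}(x)$ that is $-\Id$ on this block and $\Id$ elsewhere. This changes the determinant by $(-1)^k=-1$. Hence the coset contains matrices of both signs of determinant, and $-x$ is in the $\SL$-orbit.

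\textbf{Case 2: all blocks have even size.} Here $\det D=\prod_i(-1)^{k_i/2}=(-1)^{n/2}$, using $\sum k_i=n$. If $n\equiv 0 \bmod 4$, this is positive and we are done. If $n\equiv 2\bmod 4$, then $\det D<0$, and it remains to show that every element of $Z_{\GL}(x)$ has positive determinant. I expect this to be the main step. To handle it, group the blocks by size: if $m_k$ blocks have size $k$, write $\setR^n=\bigoplus_k V_k\otimes\setR^k$ with $\dim V_k=m_k$. The reductive part of the centralizer is then $\prod_k \GL(V_k)$, acting as $A\otimes \Id_k$, and the remaining part is unipotent. This is the structure gathered in Appendix~\ref{secno:centralizer}, and I would invoke its determinant formula: for $g\in Z_{\GL}(x)$,
\[
    \det g=\prod_k \det(A_k)^{k},
\]
where $A_k$ is the induced action on the top graded piece. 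Since every $k$ is even, every factor is a square, so $\det g>0$. Alternatively, one can argue directly that $Z_{\GL}(x)$ is a semidirect product of a connected unipotent group with $\prod_k\GL(V_k)$, and that the determinant restricted to the $\GL(V_k)$ factor is $\det(A_k)^k$.

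The final assertion then follows from Corollary~\ref{corno:NoGenTemp}, since $\SL(n,\setR)$-orbits are exactly the adjoint orbits and $\Orb=-\Orb$ in the cases above. One caveat: when $x=0$ the orbit is trivial, so the statement implicitly concerns nonzero $x$, which is the setting of Corollary~\ref{corno:NoGenTemp}.
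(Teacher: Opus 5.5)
Your proof is correct and follows essentially the same route as the paper. Both arguments reduce the question to the signs of determinants in $Z_{\GL(n,\setR)}(x)$ via Proposition~\ref{propno:DeterminantCentralizer}, and both use the alternating diagonal matrix of determinant $(-1)^{n/2}$ in the all-even case. Your differences are minor: you exhibit the negative-determinant centralizer element ($-\Id$ on an odd block) directly instead of quoting the appendix for that direction, and your remark that $x\neq 0$ must be assumed for the Gibbs-state conclusion is a fair caveat to the statement as written.
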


\begin{proof}
    First of all, since $x$ and $-x$ have the same Jordan form, they are conjugate under $\GL(n, \setR)$.
    
    %We consider the case when $n$ is odd : $-I_n$ commutes with $x$ and has determinant $-1$ thus the $\GL(n, \setR)$-orbit is a $\SL(n, \setR)$-orbit and $x$ and $-x$ are $\SL(n, \setR)$-conjugate.

    %We now assume that $n$ is even.
    Let $(m_i)_{1\leqslant i \leqslant n}$ be the multiplicities of the Jordan block of dimension $i$ in $x$: $\sum_{i} m_i i = n$. According to Proposition~\ref{propno:DeterminantCentralizer}, $Z_{\GL(n, \setR)}(x)$ contains elements of negative determinant if and only if there exists an odd dimension $i$ with a nonzero multiplicity $m_i$. In this case, $\SL(n, \setR)$ acts transitively on the $\GL(n, \setR)$-orbit hence conjugates $x$ and $-x$.

    We now assume that all Jordan blocks have even dimension; in particular the dimension $n$ is even. The conjugacy class of $x$ decompose into two $\SL(n, \setR)$-orbits. 
    Let $x'$ be a Jordan form of $x$: the only nonzero entries of $x'$ are right above the diagonal. Note that $x'$ and $-x'$ belong to the same $\SL(n, \setR)$-orbit if and only if $x$ and $-x$ belong to the same $\SL(n, \setR)$-orbit. Define the following matrix
    \[
        S:= \begin{pNiceMatrix}
                1 & & & & \\
                 & -1& & & \\
                 & & 1& & \\
                 & & & -1& \\
                 & & & & \ddots
        \end{pNiceMatrix}
        \in \GL(n, \setR)
    .\]
    Then $\det(S) = (-1)^{n/2}$ and $Sx'S^{-1} = -x'$. 
    Therefore:
    \begin{itemize}
        \item If $n \equiv 0 \mod 4$, $S\in \SL(n, \setR)$ and $x'$ is $\SL(n, \setR)$-conjugate to $-x'$, and the same holds for $x$ and $-x$.
        \item If $n \equiv 2 \mod 4$, $\det(S)=-1$ and $x'$ and $-x'$ belong to different $\SL(n, \setR)$-orbits, and the same holds for $x$ and $-x$.
    \end{itemize}

    The absence of Gibbs states in a consequence of Corollary~\ref{corno:NoGenTemp}.
\end{proof}

\subsection{Partition into affine lines}
\label{secno:AffineLines}

Consider the set of nilpotent matrices of size $n$ with nilpotency index $r+1$ and assume that $r+1 >2$. There is an action of $\setR$ as follows:
\begin{equation}\label{eqno:ActionRonNilpotent}
    \phi : (t, x) \mapsto x + t x^r
.\end{equation}
The orbits of $\phi$ are affine lines, with an affine parametrization.

On the other hand, the function $\xi\in \sl(n, \setR) \mapsto e^{-\sprod\beta\xi}$ is not (Lebesgue-)integrable on any affine line in $\sl(n, \setR)$, no matter $\beta\in \sl(n, \setR)^*$.
In this section, we use this observation to show that the nilpotent orbits of index $r+1 >2$ have no Gibbs state. To this end, we need to show that:
\begin{itemize}
    \item the action~\eqref{eqno:ActionRonNilpotent} preserves nilpotent orbits,
    \item the induced measure on the affine lines corresponds to Lebesgue measure.
\end{itemize}

Before proving these, we need to relate the integrability on a nilpotent orbit to the integral on each affine line. This is the object of Fubini-Tonelli theorem.
Since it is not easy to find a reference with a suitable form of the theorem, we provide a short proof.

\begin{theorem}[Fubini-Tonelli theorem]\label{thmno:Fubini}
    Let $E\xrightarrow{\pi} B$ be a fiber bundle, $\omega_E$ a volume form on $E$, $\omega_B$ a volume form on $B$ and $f: E \to \setR_+$ a smooth function.

    The isomorphism of line bundles 
    \[
        \Lambda^{\dim E} T^*E \simeq \Lambda^{\dim B} \pi^*{T^* B} \otimes \Lambda^{\dim E - \dim B} (\ker \d \pi )^*
    \]
    allows defining a divided fiber volume form $\frac{\omega_E}{\pi^*\omega_B}$ on $E$.

    Writing fibers as $E_b := \pi^{-1}(b)$, there is an equality of positive integrals
    \[
        \int_{E} f |\omega_E| 
        = \int_B \; \left[ 
                \int_{E_b} f \left| \frac{\omega_E}{\pi^*\omega_B} \right|    
            \right] \;|\omega_B(b)|
        \in [0, +\infty]
    \]
    where $| \argdot |$ denote the absolute value of a volume form, which is a positive density.
\end{theorem}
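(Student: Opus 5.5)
The plan is to reduce to the classical Fubini--Tonelli theorem on product spaces by working in local trivializations, and then patch the local statements together with a partition of unity.

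First, I make the divided fiber volume form precise. The short exact sequence $0\to \ker \d\pi \to TE \to \pi^*TB\to 0$ yields the stated isomorphism of top exterior powers. At each point $e\in E$, the form $\frac{\omega_E}{\pi^*\omega_B}$ is the unique element $\nu_e\in \Lambda^{\dim E-\dim B}(\ker \d\pi_e)^*$ with the following property: for any $v_1,\dots,v_k\in \ker\d\pi_e$ and any $w_1,\dots,w_m\in T_eE$,
\[
\omega_E(w_1,\dots,w_m,v_1,\dots,v_k)=\omega_B(\d\pi\, w_1,\dots,\d\pi\, w_m)\,\nu_e(v_1,\dots,v_k).
\]
I will check that this is independent of the choice of lifts $w_i$, since changing a lift by a vertical vector creates a repeated direction in the wedge. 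Smoothness of $\nu$ follows from its smoothness in local frames. Restricted to each fiber $E_b$, $\nu$ is a volume form, so $|\nu|$ is a positive density on $E_b$.

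Next, I treat the local case. Take a trivialization $\pi^{-1}(U)\simeq U\times F$ with $U$ a coordinate chart in $B$, and local coordinates $y$ on $F$. In these coordinates, write
\[
\omega_E=g(b,y)\,\d b\wedge \d y, \qquad \omega_B=h(b)\,\d b.
\]
The defining identity then gives $\nu = (g/h)\,\d y$ on the fibers. For $f\geq 0$ supported in such a chart, the classical Tonelli theorem on $\setR^m\times\setR^k$ gives
\[
\int |g| f \,\d b\,\d y=\int \left(\int |g/h| f\,\d y\right)|h|\,\d b,
\]
which is exactly the claimed identity. The inner function of $b$ is measurable, with values in $[0,+\infty]$, again by Tonelli.

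Finally, I globalize. I choose a countable locally finite cover of $E$ by such product charts, and a smooth partition of unity $(\rho_i)$ subordinate to it. Then $f=\sum_i \rho_i f$ with each $\rho_i f\geq 0$. Applying the local identity to each term and summing, the monotone convergence theorem (positivity is essential here) allows interchanging the sum with both the inner fiber integral and the outer integral over $B$. This yields the equality in $[0,+\infty]$.

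I expect the main obstacle to be the first step: verifying that the divided form is well defined and coordinate-independent, and that it matches $g/h$ in local coordinates. The measure-theoretic part is standard once positivity is used throughout.
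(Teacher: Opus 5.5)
Your proposal is correct and follows the same route as the paper. Both reduce, via a partition of unity, to product charts coming from a local trivialization, identify the divided form there as a ratio of coefficients (the paper's $u\,\omega_F$ with $\omega_E = u\,\omega_F\wedge\pi^*\omega_B$), and apply the classical Tonelli theorem. Your version is more explicit where the paper is brief: it checks that $\frac{\omega_E}{\pi^*\omega_B}$ is well defined pointwise, and it sums the local identities by monotone convergence, using measurability of the fiber integrals. Working with coordinate charts on $F$ also avoids assuming a global volume form $\omega_F$.
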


\begin{proof}
    By the standard argument of a partition of unity, it is sufficient to prove the theorem over each member of an open cover of $E$. 
    Using a trivialization cover reduces the problem to the case of a product manifold $E = F\times B$, where the standard Fubini-Tonelli theorem applies. 
    
    Let $\omega_F$ be a volume form on $F$. Then there exists $u : F \times B \to \setR^*$ such that $\omega_E= u \,\omega_F \wedge \pi^*\omega_B $ (with an implicit pullback of $\omega_F$ to $F\times B$), namely $\frac{\omega_E}{\pi^*\omega_B} = u \,\omega_F$.
    
    Writing $\mu_F$ and $\mu_B$ the Borel measures respectively associated with $|\omega_F|$ and $|\omega_B|$, it can be checked locally that $\mu_F\otimes \mu_B$ is the Borel measure associated with $|\omega_F\wedge \omega_B|$, so that the Fubini-Tonelli theorem applied to the positive function $f|u|$ allows concluding.
\end{proof}

\begin{corollary}\label{corno:Fubini}
    Under the same hypotheses, if $\int_{E} f |\omega_E| < \infty$, 
    then $\int_{E_b} f \left| \frac{\omega_E}{\pi^*\omega_B} \right| < \infty$ for almost every $b\in B$.
\end{corollary}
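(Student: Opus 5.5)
We derive Corollary~\ref{corno:Fubini} directly from Theorem~\ref{thmno:Fubini}. The approach is the standard measure-theoretic fact that a nonnegative measurable function with finite integral is finite almost everywhere. Set
\[
    F(b) := \int_{E_b} f \left| \frac{\omega_E}{\pi^*\omega_B} \right| \in [0, +\infty].
\]
Theorem~\ref{thmno:Fubini} gives $\int_B F \,|\omega_B| = \int_E f|\omega_E| < \infty$.

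\textbf{Measurability of $F$.} Before integrating $F$ we must know that $F$ is Borel measurable on $B$. We get this from the same reduction used in the proof of Theorem~\ref{thmno:Fubini}. Over a trivializing open set $U \subset B$ with $\pi^{-1}(U) \simeq F_0 \times U$, we have $F(b) = \int_{F_0} f(\cdot, b)\,|u(\cdot,b)|\,\d\mu_{F_0}$. The classical Fubini--Tonelli theorem asserts that such a function of $b$ is measurable, since the integrand $f|u|$ is nonnegative and Borel. Measurability is a local property and $B$ is second countable, so a countable trivializing cover shows that $F$ is measurable on all of $B$.

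\textbf{Conclusion.} Let $N := \{ b \in B : F(b) = +\infty\}$, which is measurable. Suppose $N$ had positive $|\omega_B|$-measure. Then for every $k$,
\[
    \int_B F\,|\omega_B| \geq \int_N F\,|\omega_B| \geq k\,|\omega_B|(N),
\]
so the integral would be infinite, contradicting the finiteness above. Hence $N$ is $|\omega_B|$-null. Since $\omega_B$ is a volume form, its null sets are exactly the Lebesgue-null sets in charts, so ``almost every'' is meant in the intrinsic sense. This proves the corollary.

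The only step needing care is the measurability of $F$. It is already implicit in the product-case Fubini--Tonelli theorem invoked in the proof of Theorem~\ref{thmno:Fubini}, so we expect no real obstacle.
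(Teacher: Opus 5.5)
Your proof is correct and is the argument the paper leaves implicit (it states the corollary without proof): the corollary follows at once from Theorem~\ref{thmno:Fubini}, since a nonnegative function with finite integral is finite almost everywhere. Your measurability check, via a countable trivializing cover and classical Tonelli, is a harmless extra step, because the theorem's statement already presupposes that the fiber integral is a measurable function of $b$.
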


We now establish the required properties of $\phi$.
\begin{lemma}
    \begin{enumerate}
        \item The action $\phi$ commutes with the adjoint action.
        \item The nilpotent adjoint orbits of index $r+1$ are stable under the action $\phi$.
    \end{enumerate}
\end{lemma}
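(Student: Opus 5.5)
For item 1, I will check directly that $\phi_t$ is equivariant under conjugation. For $g\in \SL(n,\setR)$ we have $\Ad_g(x) = gxg^{-1}$, so $(gxg^{-1})^r = g x^r g^{-1}$. Hence
\[
\phi_t(\Ad_g x) = gxg^{-1} + t\, g x^r g^{-1} = \Ad_g(\phi_t(x)).
\]
Before this, I will note that $\phi$ really is an action of $\setR$ on nilpotent matrices of index $r+1$. Since $x$ and $x^r$ commute, and $r\geq 2$ gives $2r \geq r+1$, we get $(x+tx^r)^r = x^r$. Therefore
\[
\phi_s(\phi_t(x)) = x + tx^r + s x^r = \phi_{s+t}(x).
\]
The same binomial computation gives $(x+tx^r)^k = x^k + k t x^{k+r-1}$ plus higher terms. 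So $\phi_t(x)$ is nilpotent, and its index is still $r+1$ because its $r$-th power is $x^r \neq 0$.

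For item 2, I need $\phi_t(x)\in \Ad_{\SL(n,\setR)}(x)$. First I show $\GL$-conjugacy. Set $y = x + t x^r = x(1 + t x^{r-1})$. The matrix $u := 1 + t x^{r-1}$ is unipotent and commutes with $x$. Since $x$ and $u$ commute, for every $k$ we have $y^k = x^k u^k$ with $u^k$ invertible, so $\operatorname{rank} y^k = \operatorname{rank} x^k$ for all $k$. The ranks of powers determine the Jordan type of a nilpotent matrix, so $y$ and $x$ are $\GL(n,\setR)$-conjugate.

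To land in the $\SL(n,\setR)$-orbit, I will avoid the determinant issue by a continuity argument. The curve $t\mapsto \phi_t(x)$ is connected, contains $x$ at $t=0$, and lies in the $\GL(n,\setR)$-orbit of $x$. That orbit is the union of at most two $\SL(n,\setR)$-orbits, by the lemma above. These $\SL(n,\setR)$-orbits are orbits of the same dimension under a connected group inside one $\GL$-orbit, and the $\GL$-orbit is a homogeneous manifold. Its $\GL^+$-orbits are open in it, since $\GL^+$ has finite index and is open in $\GL$. So the $\SL$-orbits are open and disjoint, hence they are the connected components of the $\GL$-orbit. The connected curve therefore stays in the component containing $x$.

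Alternatively, one can give an explicit conjugator. One solves $g x g^{-1} = y$ with $g$ a polynomial in $x$ that is unipotent, so $\det g = 1$, for example via $g = \exp(s(x))$ for a suitable nilpotent polynomial $s$. The topological argument seems cleaner, and the only delicate step is justifying that the $\SL$-orbits are open in the $\GL$-orbit, which I expect to be the main obstacle. It follows from the $\GL$-orbit being homogeneous under $\GL = \GL^+ \sqcup \GL^-$, with each of $\GL^\pm$ open.
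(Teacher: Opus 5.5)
Your argument is correct and has the same two-step structure as the paper's proof. First you show that $\phi_t(x)$ is $\GL(n,\setR)$-conjugate to $x$; then continuity in $t$ keeps it inside the $\SL(n,\setR)$-orbit. The difference is in the first step.

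\begin{itemize}
\item The paper puts $x$ in Jordan form and checks blockwise that a block $J$ of size $r+1$ goes to $J+tJ^r$. That matrix still has rank $r$, so it is again a single Jordan block.
\item You factor $x+tx^r = x(I+tx^{r-1})$, where the second factor is unipotent and commutes with $x$, and you compare the ranks of all powers. This is basis-free, and it shows that $x+tx^k$ is $\GL(n,\setR)$-conjugate to $x$ for every $k\geq 2$.
\end{itemize}

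You also check that $\phi$ is an action, which is where $r\geq 2$ is genuinely needed. And you spell out why a continuous path cannot leave an $\SL(n,\setR)$-orbit, which the paper only asserts. Its phrase \enquote{the identity component of $\GL$ is $\SL$} should read $\GL^+$, whose conjugation orbits coincide with the $\SL$-orbits; that is exactly what you use.

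One point should be made explicit. You obtain openness of the $\GL_n^+(\setR)$-orbits in the quotient topology of $\GL(n,\setR)/Z_{\GL(n,\setR)}(x)$. Your curve, however, is continuous for the topology of $\sl(n,\setR)$. The two topologies agree on the orbit because the orbit is embedded. This holds since it is locally closed: it is cut out by the conditions $\operatorname{rank} y^k = \operatorname{rank} x^k$.

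The explicit alternative you sketch does not work as stated. Any polynomial $g$ in $x$ commutes with $x$, so $gxg^{-1}=x\neq x+tx^r$ for $t\neq 0$. A determinant-one conjugator does exist, just not of that form. In a Jordan basis, let it act on each block of size $r+1$ by $I_{r+1}+t\,e_1e_r^{T}$ and by the identity on the other blocks. It is unipotent because $r\geq 2$, and using it would make the connectedness argument unnecessary.
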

\begin{proof}
    For every $x\in \sl(n,\setR)$, $t\in \setR$ and $g\in \SL(n, \setR)$, 
    \[
        \Ad_g(x + t x^r) = \Ad_g (x) + t (\Ad_g(x))^r
    ,\]
    which proves the first point.

    The $\operatorname{GL}(n, \setR)$-conjugacy classes of nilpotent matrices are characterized by the set of Jordan blocks. We assume without loss of generality that $x$ is in a Jordan form, and thus $\phi_t(x)$ can be computed blockwise.
    The maximal size of the Jordan blocks of $x$ is $r+1$; blocks of size $r$ or lower are left invariant under $\phi$. A block of size $r+1$ is mapped as follows under $\phi$:
    \[
        \phi_t\left( 
            \begin{pmatrix}
                0 & 1 & 0 & \dots & 0 \\
                0 & 0 & \ddots & 0 & 0\\
                0 & \dots &  & \ddots & 0\\
                0 & \dots &    &  0 & 1\\
                0 & \dots &    &    & 0
            \end{pmatrix}
            \right)
    = 
            \begin{pmatrix}
                0 & 1 & 0 & \dots & t \\
                0 & 0 & \ddots & 0 & 0\\
                0 & \dots &  & \ddots & 0\\
                0 & \dots &    &  0 & 1\\
                0 & \dots &    &    & 0
            \end{pmatrix}
    .\]
    The image under $\phi_t$ of the Jordan block of size $r+1$ is still a rank $r$ nilpotent matrix,
    %with a cyclic vector,
    therefore belongs to the same $\operatorname{GL}(r, \setR)$-conjugacy class. 
    Since the dependency on $t$ is continuous and the identity component of $\operatorname{GL}(r, \setR)$ is $\operatorname{SL}(r, \setR)$, $\phi$ preserves the $\operatorname{SL}(r, \setR)$-conjugacy class of each Jordan block.
    Consequently, $\phi$ preserves the $\operatorname{SL}(n, \setR)$-conjugacy class of $x$.
\end{proof}    
\begin{proposition}\label{propno:LiouvillePhiInvariant}
    The Liouville form of nilpotent adjoint orbits is invariant under $\phi$.
\end{proposition}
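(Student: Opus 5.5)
Fix a nilpotent orbit $\Orb$ of index $r+1>2$ and $t\in\setR$. We must show that $\phi_t^*\omega_\Orb=\omega_\Orb$; Liouville invariance then follows by taking the top exterior power. The strategy is to identify $\phi_t$ with the action of a one-parameter family of group elements depending on the point, and to compute its differential against fundamental vector fields.

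First note that for $x\in\Orb$, the element $\phi_t(x)=x+tx^r=(I+tx^{r-1})\,x\,(I+tx^{r-1})^{-1}$. Indeed, $x^{r-1}$ commutes with $x$, so this identity fails as written. Instead we look for $g_t(x)$ a polynomial in $x$ with $g x g^{-1}=x+tx^r$. A cleaner route is the infinitesimal one. Consider the vector field $V(x)=x^r$ on $\Orb$ generating $\phi$ (since $(x+tx^r)^r=x^r$ because $r\ge 2$ and $x^{r+1}=0$, the flow is exactly $\phi_t$). It suffices to show that $V$ is Hamiltonian, i.e. that $\iota_V\omega_\Orb=-\d H$ for some function $H$; then $\mathcal L_V\omega_\Orb=0$ by Cartan's formula.

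The natural candidate is $H(x)=c\,\tr(x^{r+1})$-type invariants, but these vanish on nilpotents. Instead, we use that on an adjoint orbit, for any smooth $F$ on $\Glie$, the Hamiltonian vector field of $F|_\Orb$ is $x\mapsto[\nabla F(x),x]$, where $\nabla$ denotes the Killing gradient (a sign convention to be fixed from~\eqref{eqn: KKS_adjoint_orbits}). We therefore need to express $x^r$ as $[y(x),x]$ with $y$ a Killing gradient. We would try $y(x)=$ a polynomial expression in $x$ and a fixed auxiliary element; this fails to be equivariant. So the fallback is a direct computation. Write tangent vectors at $x$ as $X^*=[X,x]$. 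Since $\phi_t$ commutes with $\Ad_G$ (previous lemma), $\d\phi_t([X,x])=[X,\phi_t(x)]$, i.e. $(\phi_t)_*X^*=X^*$. Hence
\[
(\phi_t^*\omega)_x(X^*,Y^*)=\omega_{\phi_t(x)}(X^*,Y^*)=\kappa(x+tx^r,[X,Y])=\omega_x(X^*,Y^*)+t\,\kappa(x^r,[X,Y]).
\]
So everything reduces to showing that $\kappa(x^r,[X,Y])=0$ whenever this depends only on $[X,x],[Y,x]$, i.e. that the correction term is a well-defined form that vanishes. Since $\kappa$ is proportional to the trace form, $\tr(x^r[X,Y])=\tr([x^r,X]Y)$, and we need it to vanish for all $X,Y$. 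This is false in general, so the key point is that it suffices to check it on the tangent space. One way to do this is to write $[x^r,X]=\sum_{k=0}^{r-1}x^k[x,X]x^{r-1-k}$. Then $\tr([x^r,X]Y)=\sum_k\tr(x^k[x,X]x^{r-1-k}Y)$, which depends on $X$ only through $X^*$, and by antisymmetry also only on $Y^*$. We then need to show that this bilinear expression in $(A,B)=([x,X],[x,Y])$ vanishes. This is the main obstacle. I would attack it by writing $B=[x,Y]$ and moving commutators via cyclicity, so that the sum telescopes to $\tr(A[x^{r},\,\cdot\,])$-type terms. If it does not vanish, the alternative is to verify directly, in a Jordan basis, that $\phi_t^*\omega-\omega$ is an exact symplectic correction with zero top power, or to simply conclude Liouville invariance via $\d\phi_t$ having unit Jacobian. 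For the latter, note that in the affine-linear coordinates $\phi_t$ is a translation along $x^r$ and is unipotent, so its Jacobian determinant relative to the $G$-invariant Liouville density is constant in $t$ and equal to $1$ at $t=0$.
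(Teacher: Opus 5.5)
Your proposal has a genuine gap. Its main line sets out to prove $\phi_t^*\omega=\omega$, which is false, and the fallback asserts the needed conclusion without a valid reason. You correctly derive $(\phi_t^*\omega)_x(X^*,Y^*)=\omega_x(X^*,Y^*)+t\,\omega_r|_x(X^*,Y^*)$ with $\omega_r|_x(X^*,Y^*)=\Killing(x^r,[X,Y])$. This $\omega_r$ is a well-defined $2$-form on $\Orb$, since it equals $(\phi_t^*\omega-\omega)/t$. But it does not vanish on the tangent space.

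For instance, let $x$ be a single Jordan block of size $n=r+1\geqslant 3$, so that $x^r=E_{1n}$. Take $X=E_{nk}$ and $Y=E_{k1}$ with $1<k<n$. Then $[X,Y]=E_{n1}$ and $\Killing(x^r,[X,Y])=2n\,\tr(E_{1n}E_{n1})=2n\neq 0$. Hence $\phi_t$ is not a symplectomorphism, and the generator $V(x)=x^r$ is not even a symplectic vector field: the Lie derivative of $\omega$ along $V$ is $\omega_r\neq 0$. No rearrangement of $\sum_k \tr(x^k[x,X]x^{r-1-k}Y)$ can make the correction vanish. Only the top power of $\omega$ is preserved, and that requires a separate argument.

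The fallback does not supply that argument. $\phi_t$ is not a translation. The unipotence of $v\mapsto v+t\sum_{k}x^kvx^{r-1-k}$ on $\sl(n,\setR)$ concerns Lebesgue measure on the ambient space. Meanwhile $\d\phi_t$ maps $T_x\Orb$ to the different space $T_{\phi_t(x)}\Orb$. So the Jacobian with respect to $\lambda$ is not the determinant of that unipotent map, and its independence of $t$ is exactly what must be proved.

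The missing idea, which is the paper's argument, combines two facts you already have. First, since $\phi_t$ commutes with $\Ad$ and $\Orb$ is homogeneous, $\phi_t^*\lambda$ is again a $G$-invariant volume form. So $\phi_t^*\lambda=c(t)\lambda$ with $c$ a continuous homomorphism $\setR\to\setR^*_+$, i.e. $c(t)=e^{at}$. Second, raising your formula to the power $d=\dim\Orb/2$ gives
\[
\phi_t^*\lambda=(\omega+t\,\omega_r)^d=\sum_{i=0}^{d}\binom{d}{i}t^i\,\omega^{d-i}\wedge\omega_r^i,
\]
which is polynomial in $t$. An exponential that is a polynomial forces $a=0$.

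Alternatively, conjugate by the dilations $x\mapsto sx$ with $s>0$. These preserve $\Orb$, multiply $\lambda$ by $s^d$, and turn $\phi_t$ into $\phi_{ts^{1-r}}$. So $e^{at}=e^{ats^{1-r}}$ for all $s>0$, and again $a=0$ since $r\geqslant 2$.
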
  
\begin{proof}
    Let $x$ be a nilpotent matrix of index $r+1$ with an adjoint orbit $\Orb$ of dimension $2d$ and denote $\omega$ the KKS form on $\Orb$ and $\lambda=\omega^d$ the Liouville form.
    Since $\phi$ commutes with the adjoint action, $\phi$ leaves any fundamental vector field invariant. Let $X, Y \in \sl(n, \setR)$. Then for $t\in \setR$,
    \[
        \phi_t^*(\omega)|_x(X^*,Y^*)
            = \omega|_{x+t x^r}(X^*, Y^*)
            = \omega|_x(X^*, Y^*) + t\Killing(x^r, [X,Y])
    ,\]
    where the second equality arises form the expression (\ref{eqn: KKS_adjoint_orbits}) of the Liouville form on adjoint orbits.
    Let us define on the adjoint orbit the $2$-form $\omega_r$ such that
    \[
        \omega_r|_x(X^*, Y^*) = \kappa(x^r, [X,Y])
    .\]
%        Since for every $t$, $(x+tx^r)^r = x^r$, $\omega_r$ is invariant under $\phi$.

    Since $\phi$ commutes with the adjoint action, it acts on the line of $\Ad$-invariant volume forms. This is a linear action, and there exists $a\in \setR$ such that for every $t\in \setR$, 
    \[
        \phi_t^*\lambda = e^{at} \lambda
    .\]
    However, a direct computation gives
    \[
        \phi_t^* \lambda
        = \phi_t^*\omega^{d}
        = \sum_{i=0}^d \binom{d}{i} t^i \omega^{d-i} \wedge \omega_r^i
    .\]
    The only possibility is $a=0$, namely $\phi$ acts trivially on the invariant volume forms.
\end{proof}

The decomposition of nilpotent adjoint orbits into $\phi$-orbits takes the form of a principal bundle:
\begin{proposition}
    Let $\Orb$ be a nilpotent adjoint orbit in $\sl(n, \setR)$ with nilpotency index $r+1 > 2$. Then
    \begin{enumerate}
        \item the action $\phi$ of $\setR$ on $\Orb$ is free, 
        \item the quotient map $\Orb \xrightarrow{\pi} \Orb / \setR$ defines a $\setR$-principal bundle map,
        \item for any $G$-invariant volume form $\omega$ on $\Orb/\setR$, the vertical volume form $\frac{\lambda}{\pi^*\omega}$ is $\phi$-invariant.
    \end{enumerate}
\end{proposition}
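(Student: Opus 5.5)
We treat the three items in order.

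\emph{Freeness.} For item 1, note first that $\phi_t\circ\phi_s=\phi_{t+s}$, since $(x+sx^r)^r=x^r$: all cross terms contain $x^{r+k}$ with $k\geqslant 1$, and $x^{r+1}=0$. So $\phi$ is an action. If $\phi_t(x)=x$, then $tx^r=0$. Because $x$ has index exactly $r+1$, we have $x^r\neq 0$, hence $t=0$. The action is therefore free. Moreover, the orbit map $t\mapsto x+tx^r$ is an affine injective map, so it is a proper embedding of $\setR$. Properness of the whole action holds as well: if $x_k\to x$ and $x_k+t_kx_k^r\to y$, then $t_kx_k^r\to y-x$ while $x_k^r\to x^r\neq 0$, which forces $t_k$ to be bounded.

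\emph{Principal bundle structure.} For item 2 we use that a free and proper smooth action of the Lie group $\setR$ on the manifold $\Orb$ makes the quotient $\Orb/\setR$ a smooth manifold, with $\pi$ a principal $\setR$-bundle (quotient manifold theorem). The bundle is in fact trivial, since $\setR$ is contractible, though we do not need this. Since $\phi$ commutes with $\Ad$ (the preceding lemma), the $G$-action descends to $\Orb/\setR$ and $\pi$ is $G$-equivariant. The main obstacle here is that existence of a $G$-invariant volume form $\omega$ on $\Orb/\setR$ is not guaranteed. The statement is phrased as ``for any'' such $\omega$, so we only need to handle the case where one exists. Alternatively, one can work locally with an arbitrary volume form on the base and track the transformation.

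\emph{Invariance of the vertical form.} For item 3 we use Proposition~\ref{propno:LiouvillePhiInvariant}, which gives $\phi_t^*\lambda=\lambda$. We also have $\pi\circ\phi_t=\pi$, so $\phi_t^*\pi^*\omega=\pi^*\omega$. The division $\frac{\lambda}{\pi^*\omega}$ is defined pointwise through the isomorphism of Theorem~\ref{thmno:Fubini}. Concretely, it is the unique form $\nu$ on $\ker \d\pi$ such that
\[
\lambda(v,w_1,\dots)=\nu(v)\,\omega(\d\pi\, w_1,\dots)
\]
for any vertical $v$ and any lifts $w_i$. This construction is natural under bundle automorphisms covering the identity of the base. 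Since $\d\phi_t$ maps $\ker\d\pi$ to itself, we get
\[
\phi_t^*\frac{\lambda}{\pi^*\omega}=\frac{\phi_t^*\lambda}{\phi_t^*\pi^*\omega}=\frac{\lambda}{\pi^*\omega}.
\]

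\emph{Consequence.} On each fiber, the vertical form is a $\phi$-invariant one-form on the orbit $x+\setR x^r$. It is therefore a constant multiple of $\d t$ in the affine parameter, i.e.\ of Lebesgue measure, which is what the subsequent non-integrability argument requires. Note that $G$-invariance of $\omega$ is not actually used for item 3; invariance only under $\phi$ suffices.
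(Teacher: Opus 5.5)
Your proof is correct. Items 1 and 2 follow the paper's outline: freeness because $x^r\neq 0$, then the quotient manifold theorem once properness is known. You also check that $\phi$ really is an $\setR$-action via $(x+sx^r)^r=x^r$ for $r\geqslant 2$, which the paper leaves implicit.

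Two steps are argued differently from the paper, in both cases to your advantage.

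\emph{Properness.} The paper fixes a compact $K\subset\Orb$ and uses a single linear form $f$ with $f(K^{(r)})\subset[\alpha,\beta]\subset\setR^*_+$. Such an $f$ need not exist. Take $J$ the regular nilpotent of $\sl(3,\setR)$ and $g=\mathrm{diag}(-1,-1,1)$; then $(\Ad_g J)^2=-J^2$, so $K=\{J,\Ad_g J\}$ admits no such $f$. The paper's argument therefore has to be localized by covering the compact sets with finitely many small pieces. Your sequential criterion, using $|t_k|=\|t_kx_k^r\|/\|x_k^r\|$ and $x_k^r\to x^r\neq 0$, avoids this issue entirely.

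\emph{Item 3.} The paper deduces from the $G$-invariance of $\lambda$ and of $\pi^*\omega$ that the quotient is $G$-invariant. Turning that into $\phi$-invariance still needs an extra remark: on each fiber, $\phi_t$ agrees with some $\Ad_{g_t}$, since if $\Ad_{g_t}x=\phi_t(x)$ then $\phi_t(\phi_s x)=\Ad_{g_t}\phi_s(x)$. You instead use $\phi_t^*\lambda=\lambda$ (Proposition~\ref{propno:LiouvillePhiInvariant}) together with $\pi\circ\phi_t=\pi$. This gives $\phi$-invariance for an \emph{arbitrary} volume form or density on $\Orb/\setR$. That is exactly the form in which the subsequent theorem uses the result, since there $|\omega|$ is any non-vanishing density.

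Your worry about existence is harmless but unfounded. Let $V_y=y^r$ be the generator of $\phi$. The form $\iota_V\lambda$ is basic, because $\mathcal{L}_V\lambda=0$, and it is $G$-invariant. Hence it descends to a $G$-invariant volume form on $\Orb/\setR$.
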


\begin{proof}
    Assertion 1. is immediate from Equation~\ref{eqno:ActionRonNilpotent}, since $x^r \neq 0$.
    For 2, we use the standard result according to which a smooth, free and proper action of a Lie group has a smooth quotient and the quotient map defines a principal bundle~\cite{LeeManifolds}. We thus need to prove that $\phi$ defines a proper action. Let $K \subset \Orb$ be a compact subset. The subset $K^{(r)}:=\{x^r\}_{x\in K} \subset \sl(n, \setR)$ is compact and does not contain $0$. There exists therefore a linear form $f \in \sl(n, \setR)^*$ and a closed interval $[\alpha, \beta]\subset \setR^*_+$ such that $f(K^{(r)}) \subset [\alpha,\beta]$. We have $f(\phi_t (K)) \subset f(K) + t[a,b]$.
    Let $K' \subset \Orb$ be another compact subset and let $[a,b]$ (resp. $[a',b']$) be an interval containing $f(K)$ (resp. $f(K')$). Then for any $t\in \setR$, 
    \[\begin{aligned}
        \phi_t (K) \cap  K' \neq \emptyset
        &\implies 
        f(\phi_t(K)) \cap  f(K') \neq \emptyset
        \\
        &\implies 
        \big([a, b] + t[\alpha, \beta] \big) \cap [a', b'] \neq \emptyset
        \\
        &\implies 
        t[\alpha, \beta] \cap [a' -b, b'-a] \neq \emptyset
        \\
        &\implies
        t \in \left[
            \frac{a'-b}{\beta}, 
            \frac{b'-a}{\alpha}
        \right] 
    .\end{aligned}\]
    This proves that the action $\phi$ is proper and that the corresponding quotient defines a principal bundle.

    Finally, if $\omega$ is $G$-invariant then $\pi^*\omega$ is $G$-invariant and since $\lambda$ is $G$-invariant (Proposition~\ref{propno:LiouvillePhiInvariant}, their quotient is $G$-invariant as well.
\end{proof}

We now have everything to prove the divergence of the partition function on nilpotent orbits of index of $3$ or higher.
\begin{theorem}
    Nilpotent orbits of $\sl(n, \setR)$ with nilpotency index strictly greater than $2$ admit no Gibbs state.
\end{theorem}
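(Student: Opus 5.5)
We argue by contradiction, combining the principal bundle structure of the previous proposition with the Fubini--Tonelli corollary. Let $\Orb$ be a nilpotent orbit of index $r+1>2$ and assume that some $\beta$ lies in the Gibbs set, so that $Z(\beta)=\int_\Orb e^{-\sprod{\beta}{x}}|\lambda|<\infty$. Take $E=\Orb$, $B=\Orb/\setR$ and $\pi$ the quotient map, which is an $\setR$-principal bundle. Equip $B$ with some volume form $\omega_B$. If no $G$-invariant one is at hand, a local volume form suffices, since the argument is local on $B$ and only needs the vertical density to be Lebesgue. By Corollary~\ref{corno:Fubini} applied to $f(x)=e^{-\sprod{\beta}{x}}$, the fiber integrals $\int_{E_b} f\,\bigl|\frac{\lambda}{\pi^*\omega_B}\bigr|$ are finite for almost every $b\in B$. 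In particular, at least one fiber has a finite integral.

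We next identify the vertical density on a fiber $E_b=\{x+tx^r : t\in\setR\}$ with a multiple of Lebesgue measure $\d t$. Since $\phi_s$ maps fibers to fibers and $\pi\circ\phi_s=\pi$, we get $\phi_s^*\pi^*\omega_B=\pi^*\omega_B$. Together with the $\phi$-invariance of $\lambda$ (Proposition~\ref{propno:LiouvillePhiInvariant}), this shows that the divided form $\frac{\lambda}{\pi^*\omega_B}$ is $\phi$-invariant. This holds for any $\omega_B$, so it matches item 3 of the previous proposition without needing $G$-invariance. On a fiber, pulled back via $t\mapsto \phi_t(x)$, this form is therefore a translation-invariant nonvanishing $1$-form on $\setR$. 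Hence it equals $c_b\,\d t$ for some $c_b\neq 0$.

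With this identification, the fiber integral becomes
\[
\int_{E_b} f\,\Bigl|\frac{\lambda}{\pi^*\omega_B}\Bigr| = |c_b|\, e^{-\sprod{\beta}{x}}\int_\setR e^{-t\sprod{\beta}{x^r}}\,\d t .
\]
This is $+\infty$ for every $b$: if $\sprod{\beta}{x^r}=0$ the integrand is a positive constant, and otherwise it is an exponential that blows up at one end. Every fiber integral thus diverges, contradicting the almost-everywhere finiteness above. Hence $\Omega_\Orb=\emptyset$.

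The main obstacle is the identification of the vertical measure with Lebesgue measure $\d t$ on each line, i.e. checking that the divided form is genuinely $\phi$-invariant. This rests on Proposition~\ref{propno:LiouvillePhiInvariant} and on the invariance of $\pi^*\omega_B$ under the fiberwise translations. Once this is settled, the divergence is the elementary fact, noted at the start of the section, that exponentials of affine functions are not integrable on a line.
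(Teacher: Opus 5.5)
Your proof is correct and follows the paper's argument: Fubini--Tonelli along the $\phi$-orbits, $\phi$-invariance of the vertical density so that it is a constant multiple of $\d t$ on each affine line, and the fact that $t\mapsto e^{-t\sprod{\beta}{x^r}}$ is not integrable on $\setR$. You also justify the invariance of the vertical form correctly, for an arbitrary $\omega_B$, from $\pi\circ\phi_s=\pi$ together with Proposition~\ref{propno:LiouvillePhiInvariant}; this is the invariance the paper actually needs, whereas its item 3 phrases the hypothesis and conclusion in terms of $G$-invariance, which plays no role here.
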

\begin{proof}
    Let $\Orb$ be a non-minimal nilpotent orbit with a nilpotency index $r+1>2$ and let $\beta\in \sl(n, \setR)$.
    Let $\left| \omega \right|$ be any non-vanishing density on the orbit manifold $\Orb/\setR$. 
    According to Corollary~\ref{corno:Fubini}, if $\xi \mapsto e^{-\sprod \beta \xi} |\lambda|$ has a finite integral then on almost every orbit of the action $\phi$, $\xi \mapsto e^{-\sprod \beta \xi}  \frac{|\lambda|}{\pi^*|\omega|}$ has a finite integral.

    Let $x\in \Orb$. Since the vertical density $\frac{|\lambda|}{\pi^*|\omega|}$ is invariant under $\phi$, the integral of $\xi \mapsto e^{-\sprod \beta \xi} \frac{|\lambda|}{\pi^*|\omega|}$ on $\Orb$ is proportional to the integral
    \[
        \int_\setR e^{-\sprod{\beta}{x+tx^r}} \d t = e^{-\sprod{\beta}{x}} \int_\setR e^{-t\sprod{\beta}{x^r}} \d t 
    \]
    which is never finite.
\end{proof}

\subsection{Nilpotent orbits of index $2$ and rank $n/2$}
\label{secno:Grassmannian}

In this section, we handle the remaining nilpotent orbits of $\sl(n, \setR)$. The remaining case is when all Jordan blocks have dimension $2$. Namely, $n=2m$ and we are looking at the orbits of the blockwise matrices
\[
	\pm
	\begin{pmatrix}
		0 & I_m\\
		0 & 0
	\end{pmatrix}
.\]

Let us fix $m\in \setN_+$. We know from Section~\ref{secno:NegationMap} that the orbit cannot have Gibbs states if $m$ is even, but we will not need to assume $m$ to be odd.
Here again, we approach the calculation of the partition function using a fibration.

Let $\Nilp$ be the set of nilpotent matrices of index $2$, with all Jordan blocks of size $2$. We know that $\Nilp$ decomposes into two $\SL(2m, \setR)$-orbits according to Section~\ref{secno:NegationMap}. Consider the following map to the $m^2$-dimensional Grassmannian $\Grass(m, 2m)$:
\[
	x \in \Nilp \xrightarrow{\pi} \ker x \in \Grass(m, 2m)
.\]
It is equivariant under the action of $\SL(2m, \setR)$, because $\ker (gxg^{-1}) = g (\ker x)$.
Given a $m$-plane $E\in \Grass(m,2m)$, the action of $\{g \in \SL(2m, \setR) \, | \, g(E) = E\}$ is transitive over the fiber $\pi^{-1}(E)$.

Let us consider the $m$-plane $E_0 = \setR^m \times 0 \subset \setR^{2m}$. The fiber $\pi^{-1}(E_0)$ takes the form
\[
	\pi^{-1}(\setR^m \times 0) = 
		\left\{
		\begin{pmatrix}
			0 & M\\
			0 & 0
		\end{pmatrix}
		\, | \,
		M \in \GL(m, \setR)
		\right\}
.\]
The stabilizer of $E_0$ in $\SL(2m)$ is the subgroup
\[
	H:=
	\Stab(\setR^m \times 0) 
	= \left\{
		\begin{pmatrix}
			A & C\\
			0 & B
		\end{pmatrix}, 
		A, B, C \in \Mat(m, \setR)
		\, | \,
		\det(A)\det(B) = 1
	\right\}
.\]
The action of $H$ on $\pi^{-1}(E_0)$ is as follows:
\begin{equation}\label{eqno:ActionHfiber}
	\Ad_{\begin{pmatrix}
				A & C\\
				0 & B
			\end{pmatrix}}
	\left(
		\begin{pmatrix}
				0 & M\\
				0 & 0
			\end{pmatrix}
	\right)
	=
		\begin{pmatrix}
				0 & AMB^{-1}\\
				0 & 0
			\end{pmatrix}
.\end{equation}
In particular, this gives the decomposition of the fiber $\pi^{-1}(E_0)$ into its intersections with the two nilpotent orbits of $\Nilp$:
\begin{equation*}
	F_{\pm} = 
		\left\{
		\begin{pmatrix}
			0 & M\\
			0 & 0
		\end{pmatrix}
		, M \in \GL(m, \setR)
		\, | \,
		 \pm \det(M) > 0
		\right\}
.
\end{equation*}
In particular, the fibers are readily parametrized by the connected group $\GL^+(m) = \{M\in \GL(m, \setR) \, |\, \det(M) > 0\}$.

In order to compute the partial integral of $e^{-\beta}$ on the fibers $F_{\pm}$, we need to identify the fiber volume form. This requires studying the behavior of $H$ on the tangent space $T_{E_0} \Grass(m,2m)$.

\begin{lemma}\label{lmno:TangentGrass}
	The tangent space $T_{E_0} \Grass(m,2m)$ is a representation of $H$ that is isomorphic to $\Mat(m, \setR)$ with the action
	\[
		\begin{pmatrix}
						A & C\\
						0 & B
					\end{pmatrix}
		\cdot
		X
			=
		B X A^{-1}
.	\]
\end{lemma}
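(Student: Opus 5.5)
We will use the standard affine chart of the Grassmannian around $E_0$. Every $m$-plane close to $E_0$ is transverse to $F_0 := 0\times\setR^m$, so it is the graph
\[
	E_X := \left\{ \begin{pmatrix} v \\ Xv \end{pmatrix} \, | \, v\in \setR^m \right\},
	\qquad X\in \Mat(m,\setR).
\]
The map $X\mapsto E_X$ is a diffeomorphism from $\Mat(m,\setR)$ onto an open neighbourhood of $E_0 = E_{0}$. Its differential at $X=0$ therefore identifies $T_{E_0}\Grass(m,2m)$ with $\Mat(m,\setR)$. This is simply the usual identification $T_{E_0}\Grass \simeq \operatorname{Hom}(E_0, \setR^{2m}/E_0)$, with $E_0\simeq\setR^m$ and $\setR^{2m}/E_0\simeq F_0\simeq\setR^m$.

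Since $H$ fixes $E_0$, it acts linearly on $T_{E_0}\Grass$, and in the chart this action is the differential at $0$ of $X\mapsto h\cdot X$. First compute $h(E_X)$ for $h=\begin{pmatrix} A & C\\ 0 & B\end{pmatrix}$. The image of $(v, Xv)$ is $((A+CX)v,\, BXv)$. For $X$ small, $A+CX$ is invertible, so we set $w=(A+CX)v$ and get
\[
	h(E_X) = E_{BX(A+CX)^{-1}}.
\]
Hence $H$ acts in the chart by the rational map $X\mapsto BX(A+CX)^{-1}$. This map fixes $0$, and its derivative at $0$ in direction $Y$ is $BYA^{-1}$, because the term involving $C$ is quadratic in $X$. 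This gives exactly the stated action $h\cdot X = BXA^{-1}$.

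Finally, we check that this is a group action of $H$, i.e. that the formula is compatible with composition. For block upper-triangular matrices the diagonal blocks multiply, $(A,B)(A',B') = (AA', BB')$. Then $BB'X(AA')^{-1} = B(B'XA'^{-1})A^{-1}$, as required. The only delicate point is sign and ordering conventions, namely which block acts on the left and which by the inverse on the right. Computing $h(E_X)$ directly, as above, settles this without ambiguity.
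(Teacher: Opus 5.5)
Your proof is correct, but it takes a different route from the paper.

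The paper does not use a chart. It invokes the standard isotropy-representation identification $T_{E_0}\Grass(m,2m)\simeq \sl(2m,\setR)/\Hlie$, with $H$ acting by $\Ad$. It then identifies the quotient with $\Mat(m,\setR)$ via the lower-left block $d$; the kernel of $Y\mapsto d$ on $\sl(2m,\setR)$ is exactly $\Hlie$. Finally it reads off from $\Ad_h$ that this block transforms as $d\mapsto BdA^{-1}$.

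You instead work in the affine graph chart $X\mapsto E_X$, compute the local action of $h$ exactly as $X\mapsto BX(A+CX)^{-1}$, and differentiate at the fixed point $0$. Your argument is more self-contained: it needs no appeal to homogeneous-space theory, and the order of $A$ and $B$ is forced by the direct computation of $h(E_X)$. It also yields the nonlinear action of $H$ near $E_0$, not just its linearization. The paper's argument is shorter and purely Lie-algebraic, at the cost of citing the isotropy-representation fact.

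The two identifications of $T_{E_0}\Grass(m,2m)$ with $\Mat(m,\setR)$ agree. In your chart, the velocity at $t=0$ of the curve $t\mapsto \exp(tY)E_0$ is exactly the lower-left block of $Y$.

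Your final check that the formula defines a group action is harmless but redundant. The differential of a group action at a fixed point is automatically a representation.
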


\begin{proof}
	It is a standard fact of the theory of homogeneous spaces that $T_{E_0} \Grass(m,2m)$ is a representation of $H$ that is isomorphic to $\sl(2m, \setR)/\Hlie$, with $\Hlie$ the Lie algebra of $H$~\cite{MichorTopicsDG}.
	$\Hlie$ can be described as
	\[
		\left\{
			\begin{pmatrix}
				a & c \\
				0 & b
			\end{pmatrix}
			, a, b, c \in \Mat(m, \setR)
			\, | \,
			\tr(a) + \tr(b) = 0		
		\right\}
	\]
	so that the quotient space $\sl(2m, \setR)/\Hlie$ has an isomorphic description
	\[
		\left[
			\begin{pmatrix}
				a & c \\
				d & b
			\end{pmatrix}
		\right]	
		\in \sl(2m, \setR)/\Hlie
		\mapsto 
		d \in \Mat(m, \setR)
	.\]
	The action of $H$ is readily obtained from the action on $\sl(2m, \setR)$, which we write partially:
\begin{equation*}
	\Ad_{\begin{pmatrix}
				A & C\\
				0 & B
			\end{pmatrix}}
	\left(
		\begin{pmatrix}
				* & *\\
				d & *
			\end{pmatrix}
	\right)
	=
		\begin{pmatrix}
				* & *\\
				BdA^{-1} & *
			\end{pmatrix}
.\end{equation*}
\end{proof}
We can then compute the determinant of the action of $H$ on $T_{E_0} \Grass(m,2m)$:
\begin{lemma}\label{lmno:detTEGrass}
\begin{enumerate}
	\item Let $A, B \in \GL(m, \setR)$. Then 
	\[
		\det\nolimits_{\setR^m \otimes \left(\setR^m \right)^*} (A\otimes B^T)
		= \det(A)^m \det(B)^m
	.\]
	\item The element
		$\begin{pmatrix}
			A & C\\
			0 & B
		\end{pmatrix} \in H$
		acts on $T_E \Grass(m,2m)$ with determinant $\det(B)^{2m}$.
\end{enumerate}
\end{lemma}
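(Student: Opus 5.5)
The plan is to prove part~1 by choosing a convenient basis of the tensor product and factoring the operator, and then to deduce part~2 from part~1 together with Lemma~\ref{lmno:TangentGrass}.

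For part~1, identify $\setR^m \otimes (\setR^m)^*$ with $\Mat(m,\setR)$, so that $A\otimes B^T$ acts as $X \mapsto A X B$. Since $A\otimes B^T = (A\otimes \Id)(\Id \otimes B^T)$ and the determinant is multiplicative, it suffices to treat $X\mapsto AX$ and $X\mapsto XB$ separately. The map $X \mapsto AX$ acts column by column: in the basis of elementary matrices ordered column by column, it is block diagonal with $m$ copies of $A$, so its determinant is $\det(A)^m$. Symmetrically, $X\mapsto XB$ acts row by row through $B^T$, giving $\det(B)^m$. Multiplying the two yields the formula. The one point needing care is the consistent choice of basis for each factor; this is harmless because a determinant does not depend on the basis.

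For part~2, use Lemma~\ref{lmno:TangentGrass}: the element of $H$ acts on $T_{E_0}\Grass(m,2m)\simeq \Mat(m,\setR)$ by $X\mapsto BXA^{-1}$. This is the operator $X \mapsto A'XB'$ with $A' = B$ and $B' = A^{-1}$. Part~1 then gives the determinant $\det(B)^m\det(A^{-1})^m = \det(B)^m \det(A)^{-m}$.

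Finally, impose the constraint defining $H$, namely $\det(A)\det(B)=1$, so that $\det(A)^{-1}=\det(B)$. This turns the determinant into $\det(B)^{2m}$, as claimed. I do not expect a real obstacle here; the step most prone to error is keeping track of inverses and transposes when matching $BXA^{-1}$ with the form $A\otimes B^T$ used in part~1.
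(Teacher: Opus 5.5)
Your proof is correct and follows essentially the paper's route: you factor $A\otimes B^T$ into left and right multiplications, note that each acts as $m$ copies of a single $m\times m$ matrix, and then apply Lemma~\ref{lmno:TangentGrass} (identifying $X\mapsto BXA^{-1}$ with $B\otimes (A^{-1})^T$) together with the constraint $\det(A)\det(B)=1$. The only cosmetic difference is that you compute the right-multiplication factor directly, row by row, whereas the paper reduces it to the case $B=\id$ by symmetry.
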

\begin{proof}
\begin{enumerate}
	\item $A\otimes B^T = (A \otimes \id_{\left(\setR^m \right)^*}) \circ (\id_{\setR^m} \otimes B^T)$. Furthermore, $\det(B^T) = \det(B)$, thus it is sufficient to prove the identity for $B=\id_{\left(\setR^m \right)^*}$. 
	Then the action of $A \otimes \id_{\left(\setR^m \right)^*}$ on $\setR^m \otimes \left(\setR^m \right)^*$ is isomorphic to the standard action of $A$ on a direct sum of $m$ copies of $\setR^m$. Therefore 
	\[
		\det(A \otimes \id_{\left(\setR^m \right)^*})
		= 
		\det(A)^m
	,\]
	which is sufficient to conclude.
	\item Let $A, B \in \GL(m, \setR)$.
    Under the natural isomorphism 
    $\Mat(m, \setR) \simeq \setR^{m}\otimes \left( \setR^m \right)^*$,
    the map $X \in \Mat(m, \setR) \mapsto B X  A^{-1}$ is linearly isomorphic to $B\otimes (A^{-1})^T \in \End\left( \setR^m \otimes \left(\setR^m \right)^* \right)$. Assuming $\det(A)\det(B)=1$, the previous point leads to a determinant of
	\[
		\det(B\otimes (A^{-1})^T)
		= \det(B)^m \det(A^{-1})^m
		= \det(B)^{2m}
	\]
	as asserted.
\end{enumerate}
\end{proof}

Let $\omega_0 \in \Lambda^{m^2} T_{E_0}^* \Grass(m,2m)$ be a nonzero volume element. It is possible to define a horizontal form $\pi^*\omega_0$ along $\pi^{-1}({E_0})$.
\begin{lemma}
	Let $h=\begin{pmatrix}
								A & C\\
								0 & B
							\end{pmatrix} \in H$.
	Then 
	\begin{equation}\label{eqno:QuasiInvarianceVerticalForm}
		\Ad_h^*\left(\frac{\lambda}{\pi^*\omega_0}\right)
		= \det(B)^{-2m} \left(\frac{\lambda}{\pi^*\omega_0}\right)
	.\end{equation}
\end{lemma}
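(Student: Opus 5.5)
The plan is to compare how $\Ad_h$ pulls back the two ingredients $\lambda$ and $\pi^*\omega_0$, and then take the quotient. Throughout, $\frac{\lambda}{\pi^*\omega_0}$ is the vertical volume element along the fiber $\pi^{-1}(E_0)$, defined through the canonical isomorphism of Theorem~\ref{thmno:Fubini}:
\[
\Lambda^{\mathrm{top}} T^*\Orb \simeq \Lambda^{m^2}\pi^*T^*\Grass(m,2m)\otimes \Lambda^{\mathrm{top}}(\ker \d\pi)^*.
\]
Since $h\in H$ fixes $E_0$, the map $\Ad_h$ preserves the fiber $\pi^{-1}(E_0)$. It is also a diffeomorphism of $\Orb$ compatible with $\pi$, because $\pi\circ \Ad_h = h\cdot\pi$ by equivariance. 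Consequently $\Ad_h$ respects the isomorphism above: pulling back a product $\pi^*\omega\otimes\nu$ by $\Ad_h$ gives $\pi^*(h^*\omega)\otimes \Ad_h^*\nu$. The pullback of the quotient is therefore the quotient of the pullbacks:
\[
\Ad_h^*\left(\frac{\lambda}{\pi^*\omega_0}\right) = \frac{\Ad_h^*\lambda}{\pi^*(h^*\omega_0)},
\]
where $h^*\omega_0$ is computed using the linear action of $h$ on $T_{E_0}\Grass(m,2m)$.

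The first ingredient is the numerator. The Liouville form $\lambda$ is $\SL(2m,\setR)$-invariant, since the action on the adjoint orbit is symplectic. Hence $\Ad_h^*\lambda=\lambda$.

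The second ingredient is the denominator. Here $h^*\omega_0$ is the pullback of a top-degree form on the vector space $T_{E_0}\Grass(m,2m)$ by the linear action of $h$. Such a pullback multiplies the form by the determinant of that action. By Lemma~\ref{lmno:detTEGrass} this determinant is $\det(B)^{2m}$, so $h^*\omega_0=\det(B)^{2m}\omega_0$.

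Dividing the numerator by the denominator gives the factor $\det(B)^{-2m}$, which is formula~\eqref{eqno:QuasiInvarianceVerticalForm}.

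The step I expect to need the most care is the claim that the division operation commutes with pullback by a bundle automorphism covering a base map. I would verify it pointwise. Choose $v_1,\dots,v_k$ vertical and $w_1,\dots,w_{m^2}$ tangent vectors at a point $x$ of the fiber, with the $w_j$ mapping to a basis $\d\pi(w_j)$ of $T_{E_0}\Grass(m,2m)$. The quotient form is characterized by
\[
\frac{\lambda}{\pi^*\omega_0}(v) = \frac{\lambda(v,w)}{\omega_0(\d\pi\, w)}.
\]
Now apply $\d\Ad_h$ to all the vectors. It maps vertical vectors to vertical vectors, and it satisfies $\d\pi\circ \d\Ad_h = h\cdot \d\pi$. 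Substituting these relations into the characterization gives the claimed identity.

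A minor point to watch is the orientation and sign convention, which could produce an absolute value. The exponent $2m$ is even, so $\det(B)^{2m}$ is positive and the sign issue does not arise.
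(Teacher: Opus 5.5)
Your proposal is correct and follows the paper's own proof. Like the paper, you use the $\Ad$-invariance of $\lambda$, then obtain the factor $\det(B)^{2m}$ on $\pi^*\omega_0$ from the equivariance of $\pi$ and Lemma~\ref{lmno:detTEGrass}, and finally divide. Your pointwise check that the quotient $\frac{\lambda}{\pi^*\omega_0}$ commutes with pullback by $\Ad_h$ spells out what the paper compresses into ``by division''.
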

\begin{proof}
	The Liouville form is invariant under the adjoint action.
	The action of $\Ad_h$ on $\pi^*\omega_0$ can be computed using Lemma~\ref{lmno:detTEGrass}:
	\[
		\Ad_h^*(\pi^*\omega_0)
		= \pi^*(\Ad_h^*|_E \, \omega_0)
		= \pi^*(\det(B)^{2m} \omega_0)
		= \det(B)^{2m} \pi^*\omega_0
	.\]
    The first equality holds because $\pi$ is $\SL(2m, \setR)$-equivariant.
	This is sufficient to obtain~\eqref{eqno:QuasiInvarianceVerticalForm}, by division.
\end{proof}

Since $H$ acts transitively on $F_\pm$, Equation~\eqref{eqno:QuasiInvarianceVerticalForm} characterizes the vertical volume form on each orbit, up to a constant.

\begin{proposition}\label{propno:LebesgueMeasure}
	The diffeomorphisms
	\begin{equation}\label{eqno:Nilpotent->Grass}
		\begin{pmatrix}
			0 & M\\
			0 & 0
		\end{pmatrix}
		\in F_{\pm}
		\mapsto 
		M \in \GL^\pm (m)
	\end{equation}
	map the fiber volume form
	$\frac{\lambda}{\pi^*\omega_0}$
	to restrictions of Lebesgue measures of the space $\Mat(m, \setR)$ of all real square matrices of size $m$.
\end{proposition}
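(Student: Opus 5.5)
Under the diffeomorphism~\eqref{eqno:Nilpotent->Grass}, $F_\pm$ is identified with the open subset $\GL^\pm(m)$ of $\Mat(m,\setR)$, so $\frac{\lambda}{\pi^*\omega_0}$ is transported to a smooth nowhere-vanishing top form $\nu$ on $\GL^\pm(m)$. It therefore suffices to show that $\nu$ is a constant multiple of the Lebesgue form $dM$, i.e.\ $\nu = c\, dM$ with $c$ a constant.

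The plan is to compare the quasi-invariance of both forms under $H$ and then use transitivity.
By Equation~\eqref{eqno:ActionHfiber}, $h=\begin{pmatrix} A & C\\ 0 & B\end{pmatrix}\in H$ acts on $\Mat(m,\setR)$ by the linear map $L_h : M\mapsto AMB^{-1}$. Since $L_h$ is linear, its Jacobian is its determinant. By the first point of Lemma~\ref{lmno:detTEGrass} (identifying $M\mapsto AMB^{-1}$ with $A\otimes (B^{-1})^T$), this determinant is $\det(A)^m\det(B)^{-m}$. Using $\det(A)\det(B)=1$, it equals $\det(B)^{-2m}$. Hence $L_h^*\, dM = \det(B)^{-2m}\, dM$.

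On the other hand, Equation~\eqref{eqno:QuasiInvarianceVerticalForm} says $\Ad_h^*\nu = \det(B)^{-2m}\nu$ under the identification. Write $\nu = f\, dM$ with $f$ smooth and nonvanishing. Comparing the two transformation laws gives $f(AMB^{-1}) = f(M)$ for all $h\in H$. Since $H$ acts transitively on each $F_\pm$, equivalently on each $\GL^\pm(m)$, $f$ is constant on each component. Thus $\nu$ is a constant multiple of Lebesgue measure restricted to $\GL^\pm(m)$.

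The main obstacle is making rigorous that the vertical form $\frac{\lambda}{\pi^*\omega_0}$ along the single fiber $\pi^{-1}(E_0)$ transforms as in Equation~\eqref{eqno:QuasiInvarianceVerticalForm}. This means keeping track of the fact that $\Ad_h$ preserves the fiber, and that $\omega_0$ lives only at $E_0$, so the pullback is computed through $d\Ad_h|_{T_{E_0}}$. A second point needing care is the sign and orientation: determinants may be negative, but we work with the densities $|\cdot|$, so only $|\det(B)|^{-2m}=\det(B)^{-2m}$ matters and the comparison goes through. Once this is granted, the rest is the linear-algebra computation above.
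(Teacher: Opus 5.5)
Your proposal is correct and follows essentially the paper's argument. Both proofs compare the $\det(B)^{-2m}$ quasi-invariance of the transported vertical form under $H$ with that of Lebesgue measure, then conclude by transitivity that their ratio is constant. The only difference is cosmetic: the paper first reduces to pure left translations $M\mapsto AM$ via the rescaled pair $(zA, zI_m)$ with $z=\det(A)^{-1/2m}$ and uses transitivity of $\GL^+(m)$ on itself, whereas you compute the Jacobian $\det(A)^m\det(B)^{-m}=\det(B)^{-2m}$ of $M\mapsto AMB^{-1}$ directly and use transitivity of $H$ on $F_\pm$.
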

\begin{proof}
	We discuss the case of $F_+$; the situation of $F_-$ is similar. Let us call $f$ the map~\eqref{eqno:Nilpotent->Grass}. We will show that the image volume form under $f$ has the same quasi-invariance property as the Lebesgue measure, and conclude that they are proportional.
    
	Let $h=\begin{pmatrix}
								A & C\\
								0 & B
							\end{pmatrix} \in H$.
	According to Equation~\eqref{eqno:ActionHfiber}, 
	\[
		f(\Ad_h y) = A f(y) B^{-1}
	.	\]
	Let $\eta := f_*\left(\frac{\lambda}{\pi^*\omega_0}\right)$ be the image in $\GL^+(m)$ of the vertical volume form on $F_+$, with $\lambda$ the Liouville form on $\Nilp$.
	Equation~\eqref{eqno:QuasiInvarianceVerticalForm} implies that
	\[
	f_*\left(
		\Ad_h^* \frac{\lambda}{\pi^*\omega_0}
		\right) 
		= \det(B)^{-2m} \eta
	.\]
	We write $L_A$ for the left action of $A$ on $\GL(m, \setR)$ and $R_B$ for the right action of $B$.
	Combining the two above equations yields
	\begin{equation}\label{eqno:EquivVolumeGLm}
		\forall A, B \in \GL(m, \setR),\,
		 \det(A)\det(B) = 1 
		\implies 
		L_A^*R_{B^{-1}}^* \eta = \det(B)^{-2m} \eta
	.\end{equation}
	
	We need to investigate the volume forms on $\GL^+(m)$ that satisfy Equation~\eqref{eqno:EquivVolumeGLm}. Let $A\in \GL^+(m)$ and define $z=\det(A)^{-1/2m}$. Then $\det(z A) \det(z I_m) = 1$ and 
	\[
		\forall M \in \GL(m, \setR), \,
		zA M (z I_m)^{-1} = A M
	.\]
	This implies the following quasi-invariance of $\eta$ under left translations:
	\[
		\forall A \in \GL^+(m), 
		\,
		L_A^* \eta
		=  \det(A)^{m} \eta
	\]
%	and there is a similar quasi-invariance under right translations:
%	\[
%		\forall B \in \GL^+(m), 
%		\,
%		R_B^* \eta
%		=  \det(B)^{m} \eta
%	.\]
%%	
%	Finally, according to Lemma~\ref{lmno:detTEGrass},
%	\[
%		\det(A\otimes B^T) = \det(A)^m \det(B)^m
%	\]
%	is the determinant of the linear map $M\in \Mat(m)\mapsto AMB$. Therefore, the quasi-invariance of $\eta$ under right and left translations coincides with that of (the restriction of) Lebesgue measures on $\GL(m, \setR)$. Since this action of $\GL(m, \setR)\times \GL(m, \setR)$ [...]
    which is the same quasi-invariance than the restriction of the Lebesgue measure to $\GL(m, \setR)$, according to the determinant formulas from Lemma~\ref{lmno:detTEGrass}.
    Since the action of $\GL(m, \setR)$ by left translations
    is transitive, this property characterizes the Lebesgue measures, and proves that $\eta$ coincides with the restriction to $\GL(m, \setR)$ of a Lebesgue measure on $\Mat(m, \setR)$.
\end{proof}
    Note that $\GL^\pm(m, \setR)$ are two open cones in $\Mat(m, \setR)$.
    Having identified the vertical measure, it is now easy to recognize a non-integrable behavior in the integrand. 
\begin{lemma}\label{lmno:NegativetrAM}
	If $m>1$ then for any $A\in \Mat(m, \setR) \setminus\{0\}$ there exists $M\in \GL^+(m)$ such that $\tr(AM)<0$.
\end{lemma}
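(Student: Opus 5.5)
The strategy is to reduce to a normal form for $A$ using the invariance of $\GL^+(m)$ under left and right multiplication by $\GL^+(m)$. Then we exhibit an explicit $M$ of positive determinant.

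\emph{Reduction.} For $P, Q \in \GL^+(m)$ we have
\[
\tr\bigl((PAQ)M\bigr) = \tr\bigl(A\,(QMP)\bigr),
\]
and $M \mapsto QMP$ is a bijection of $\GL^+(m)$. So it suffices to prove the statement for $PAQ$ in place of $A$. Using row and column operations with positive-determinant matrices, we can bring $A$ to the form $D = \operatorname{diag}(\epsilon,1,\dots,1,0,\dots,0)$ with $\epsilon \in \{\pm 1\}$ and rank $k \geq 1$. Indeed, the singular value decomposition gives $A = U\Sigma V$ with $U, V$ orthogonal. Flipping the sign of a row of $U$ and of the corresponding row of $\Sigma$ puts $U$ in $\SO(m)$, and similarly for $V$. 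The positive diagonal scalings can then be absorbed by positive diagonal matrices. Only one sign ambiguity remains, and we place it on the first diagonal entry.

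\emph{Construction.} It remains to find $M \in \GL^+(m)$ with $\tr(DM) = \sum_{i \le k} D_{ii} M_{ii} < 0$. This is where $m>1$ is used. Take $M$ diagonal with $M_{11} = -\epsilon$, $M_{22} = -1$, and $M_{ii} = 1$ for $i \ge 3$, so that $\det M = \epsilon$. If $\epsilon = 1$, this $M$ has $\det M = 1 > 0$, and $\tr(DM) = -1 - [k\ge 2] < 0$. If $\epsilon = -1$, take instead $M = \operatorname{diag}(1,1,\dots,1)$, for which $\tr(DM) = -1 + (k-1)$. This may fail, so a different choice is needed. Take $M_{11} = t > 0$ large and $M_{ii} = 1$ otherwise. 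Then $\tr(DM) = -t + (k-1)$, which is negative for $t > k-1$, and $\det M = t > 0$. In the case $\epsilon = 1$ we need the two negative entries $-1,-1$ to keep the determinant positive, and this requires a second index, which exists because $m \ge 2$. The second entry must not cancel the negativity: the first term contributes $-1$, and the second contributes $-1$ if $k \ge 2$ and $0$ otherwise, so the sum stays negative.

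\emph{Main obstacle.} The only delicate point is the reduction to normal form while staying inside $\GL^+(m)$, i.e.\ keeping track of the signs of the determinants. Alternatively, one can avoid it altogether by perturbation. Take $M_0 = -A^T$, so that $\tr(AM_0) = -\tr(AA^T) < 0$, and note that $\{M : \tr(AM) < 0\}$ is an open half-space. An open half-space in $\Mat(m,\setR)$ must meet the open dense set $\GL^+(m) \cup \GL^-(m)$. It must in fact meet $\GL^+(m)$ when $m>1$: if it met only $\GL^-(m)$, multiplying a point in it by $\operatorname{diag}(-1,1,\dots)$ would not preserve the half-space in general, so the normal-form argument above is the cleaner route. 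I would present the normal-form argument.
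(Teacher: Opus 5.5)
Your plan matches the paper's: bring $A$ to a rank normal form by multiplying on the left and right, then test it against an explicit diagonal matrix. The reduction inside $\GL^+(m)$ is fine, apart from one slip: to correct $\det U$ you flip a \emph{column} of $U$ together with the corresponding row of $\Sigma$, since $U\Sigma=(US)(S\Sigma)$ for $S=\operatorname{diag}(-1,1,\dots,1)$. Your case $\epsilon=-1$ is also correct. The case $\epsilon=1$ fails as written, however. With $D=\operatorname{diag}(1,\dots,1,0,\dots,0)$ of rank $k$ and $M=\operatorname{diag}(-1,-1,1,\dots,1)$ you get
\[
\tr(DM)=\sum_{i\le k} M_{ii} = -1-1+(k-2)=k-4 \qquad (k\ge 2),
\]
not $-1-[k\ge 2]$. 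You dropped the contributions $+1$ of the indices $3\le i\le k$. So for $k\ge 4$ your $M$ does not work: for $A=I_4$ it gives $\tr(AM)=0$, and for $k\ge 5$ the trace is positive. In the case $\epsilon=-1$ you did account for these terms by introducing a large parameter; the same is needed here.

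The repair is to take $M=\operatorname{diag}(-t,-1,1,\dots,1)$ with $t>0$. Then $\det M=t>0$, and $\tr(DM)$ equals $-t$ if $k=1$ and $k-3-t$ if $k\ge 2$. Both are negative as soon as $t>k-3$, for instance $t=m$. This is exactly the paper's $D_-=\operatorname{diag}(-m,-1,1,\dots,1)$.

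The paper handles the signs differently. It reduces with arbitrary $P,Q\in\GL(m,\setR)$ to $\operatorname{diag}(I_d,0)$. It then chooses between $D_+=\operatorname{diag}(-m,1,1,\dots,1)$ and $D_-$ according to the sign of $\det(P)\det(Q^{-1})$. These two matrices have determinants of opposite signs, and both give negative trace against $\operatorname{diag}(I_d,0)$. This avoids the SVD and the tracking of $\epsilon$ entirely, whereas your version moves the sign into the normal form. In either organization, the point your $\epsilon=1$ construction misses is that the large negative entry must dominate up to $k-2$ positive diagonal contributions.
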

\begin{proof}
	Let $A\in \Mat(m, \setR) \setminus\{0\}$, let $d\geqslant 1$ be its rank. There exist $P,Q\in \GL(m, \setR)$ such that
	\[
		P A Q^{-1} = 
		\begin{pmatrix}
		I_d & 0 \\
		0 & 0
		\end{pmatrix}
	.\]
	Define the matrices
	\[
		D_\pm =
		\begin{pmatrix}
			-m &  &  &  &  &  \\
			   &\pm1&  &  &  &  \\
			   &  & 1&  &  &  \\
			   &  &  & 1&  &  \\
			   &  &  &  & 1&  \\
			   &  &  &  &  & \ddots 
		\end{pmatrix}
		\in \GL^{\mp}(m)
	.\]
	Then 
	\[
		\tr(A (Q^{-1}D_\pm P))
		= \tr(PAQ^{-1} D_\pm)
		\leqslant -2 \pm 1 < 0
	\]
	and depending on the sign of $\det(P)\det(Q^{-1})$, either $Q^{-1} D_+ P$ or $Q^{-1} D_- P$ has the wanted properties.
\end{proof}

\begin{theorem}
	If $m>1$ then neither of the two adjoint orbits that compose $\Nilp$ admit Gibbs states.
\end{theorem}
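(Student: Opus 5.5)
We argue by contradiction. Suppose $\beta \in \sl(2m, \setR)$ is such that $Z(\beta) < \infty$ on one of the two orbits $\Orb \subset \Nilp$, say the one whose fiber over $E_0$ is $F_+$; the case $F_-$ is identical. The restriction $\pi : \Orb \to \Grass(m,2m)$ is a fiber bundle: it is $\SL(2m,\setR)$-equivariant onto a homogeneous space, hence locally trivial. We apply Corollary~\ref{corno:Fubini} with a nonvanishing density $|\omega|$ on $\Grass(m,2m)$ extending $|\omega_0|$ at $E_0$. It gives that for almost every $E \in \Grass(m,2m)$,
\[
    \int_{\pi^{-1}(E)\cap \Orb} e^{-\sprod{\beta}{\xi}} \left|\frac{\lambda}{\pi^*\omega}\right| < \infty .
\]

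The first step is to transport this to the base point $E_0$. Pick $E$ in the full-measure set and $g \in \SL(2m,\setR)$ with $g(E_0) = E$. The map $\Ad_g$ sends $\pi^{-1}(E_0)\cap\Orb$ onto $\pi^{-1}(E)\cap\Orb$. Pulling back, the integrand becomes $e^{-\sprod{\Ad_{g^{-1}}\beta}{\xi}}$, using $\Ad$-invariance of the Killing form. The fiber density pulls back to a positive constant multiple of $\left|\frac{\lambda}{\pi^*\omega_0}\right|$, because $\lambda$ is invariant and $\Ad_g^*\omega$ at $E_0$ is a multiple of $\omega_0$.

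Set $\beta' = \Ad_{g^{-1}}\beta$ and write it in blocks as $\begin{pmatrix} a & c \\ d & b\end{pmatrix}$. For $\xi = \begin{pmatrix} 0 & M\\ 0 & 0\end{pmatrix}$, the Killing form of $\sl(2m,\setR)$ is a positive multiple of the trace form, so $\sprod{\beta'}{\xi} = 4m \,\tr(dM)$. By Proposition~\ref{propno:LebesgueMeasure}, finiteness on the fiber therefore becomes
\[
    \int_{\GL^+(m)} e^{-4m\,\tr(dM)}\, \d M < \infty
\]
for the Lebesgue measure on $\Mat(m,\setR)$.

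The second step shows that this integral is always infinite. If $d = 0$, the integrand is $1$ on the open cone $\GL^+(m)$, which has infinite Lebesgue measure. If $d \neq 0$, Lemma~\ref{lmno:NegativetrAM} (here we use $m>1$) gives $M_0 \in \GL^+(m)$ with $\tr(dM_0) < 0$. Since $\GL^+(m)$ is an open cone, there is a small ball $U \ni M_0$ inside $\GL^+(m)$ on which $\tr(dM) \le -\varepsilon < 0$. The sets $sU$ for $s \ge 1$ lie in $\GL^+(m)$, and the union $\bigcup_{s\ge 1} sU$ is a conical region on which the integrand is at least $e^{4m\varepsilon s}$, where $s$ is the scaling parameter. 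Its Lebesgue measure is not small: in polar coordinates around the ray it grows like $s^{m^2-1}\,\d s$. Hence the integral diverges. This contradicts the almost-everywhere finiteness, so $Z(\beta) = \infty$ for every $\beta$ and the orbit has no Gibbs state.

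The main obstacle is the bookkeeping in the first step. One has to check that the bundle structure and the fiber density are compatible with moving from $E$ to $E_0$ by an element $g$, since $\omega$ is not $G$-invariant. This works because at a single fiber only a positive constant factor appears, which does not affect finiteness. The divergence estimate itself is elementary once Lemma~\ref{lmno:NegativetrAM} is available.
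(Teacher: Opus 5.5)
Your proposal is correct and takes essentially the same route as the paper. Both use the kernel fibration $\pi$ over $\Grass(m,2m)$ and transport a fiber to $E_0$ by some $g\in\SL(2m,\setR)$. Both then identify the fiber measure with Lebesgue measure on $\GL^{\pm}(m)$ via Proposition~\ref{propno:LebesgueMeasure}, and conclude with Lemma~\ref{lmno:NegativetrAM} and a conical divergence argument. The differences are cosmetic: you argue by contradiction on one generic fiber via Corollary~\ref{corno:Fubini} rather than proving divergence on every fiber, you make the paper's $A_g$ explicit as $4m$ times the lower-left block of $\Ad_{g^{-1}}\beta$, and you use a scaled ball instead of polar coordinates.
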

\begin{proof}
	Let $\beta\in \sl(2m, \setR)$ and $\omega$ be any volume form on $\Grass(m,2m)$ (Grassmannians are orientable). We will prove that $e^{-\sprod{\beta}{\argdot}} \frac{\lambda}{\pi^*\omega}$ has an infinite integral on every fiber of $\pi$. Theorem~\ref{thmno:Fubini} will then allow to conclude.
	
	Let $E\in \Grass(m,2m)$ and $g\in \SL(2m, \setR)$ such that $g(\setR^m \times 0) = E$.
	The element $g$ relates the fiber above $E$ to the fiber above $\setR^m \times 0$:
	\[
		g^{-1}\cdot \pi^{-1}(E) = \pi^{-1}(\setR^m\times \{0\}) = F_+ \cup F_-
	.\]
	Let us define $F^E_{\pm} = g\cdot (F_\pm)$: they are the intersections of $\pi^{-1}(E)$ with each of the two adjoint orbits. We are interested in calculating the following integrals:
	\begin{equation*}
	\begin{aligned}
		\int_{F^E_{\pm}} e^{-\sprod\beta x} \frac{\lambda(x)}{\pi^*\omega(x)}
			&= \int_{F_\pm} e^{-\sprod{\beta}{\Ad_g x}} \frac{\Ad_g^*\lambda(x)}{\Ad_g^*\pi^*\omega(x)}\\
			&= \int_{F_\pm} 
				e^{-\sprod{\Ad_g^{-1}\beta}{x}} 
				\frac{\lambda(x)}{\pi^*g^*\omega(x)}
	.\end{aligned}
	\end{equation*}
	The restriction of $\sprod{\Ad_g^{-1}\beta}{\argdot}$ to the subspace of matrices of the form $\begin{pmatrix}
		0 & M\\
		0 & 0
	\end{pmatrix}$
	with $M\in \Mat(m, \setR)$ defines a linear form, therefore there exists $A_g \in \Mat(m, \setR)$ such that 
	\[
		\forall M \in \Mat(m, \setR), \,
		\sprod{\Ad_g^{-1}\beta}{\begin{pmatrix}
				0 & M\\
				0 & 0
			\end{pmatrix}}
			=
		\tr(A_gM)
	.\]
	
	From there, there are two cases:
	\begin{itemize}
	\item $A_g=0$. Then 
	\[
		\int_{F_\pm} 
			e^{-\sprod{\Ad_g^{-1}\beta}{x}} 
			\frac{\lambda(x)}{\pi^*g^*\omega(x)}
		= \int_{F_\pm} 
			1 
			\frac{\lambda(x)}{\pi^*g^*\omega(x)}
	\]
	which corresponds, according to Proposition~\ref{propno:LebesgueMeasure}, to the Lebesgue volume of $\GL^\pm(m, \setR)\subset \Mat(m, \setR)$, hence is infinite.

	\item $A_g\neq 0$. We only treat the case of $F_+$; $F_-$ can be handled similarly.
	In this case, Lemma~\ref{lmno:NegativetrAM} ensures the existence of $M_0\in \GL^+(m)$ such that $\tr(A_gM)<0$. Let us fix any norm on $\Mat(m, \setR)$; we deduce the existence of an \emph{open subset} $\opU$ of $\{ M \in \GL^+(m) \, | \, \lVert M \rVert = 1\}$ on which $M\mapsto \tr(A_g M) <0$. 
    Using a polar decomposition 
    $\Big($
    $M \mapsto \left( \lVert M \rVert, \frac{M}{\lVert M \rVert} \right)$
    $\Big)$%
    , we show that the Lebesgue integral over the open cone spanned by $\opU$ is infinite:
	\[
		\int_{\setR^*_+  \cdot \opU} e^{-\tr(A_g M)} \d M
		\stackrel{M=t U}{=}
		\int_{\opU} \int_0^\infty e^{-t \tr(A_g U)} \d t \, \sigma(U)
		= \infty
	\]
	where $\d M$ and $\d t$ are Lebesgue measures and $\sigma$ is the volume form of the sphere.
    Since $\setR^*_+ \cdot \opU \subset \GL^+(m)$, 
    this proves that 
	\[
		\int_{F_+} 	e^{-\sprod{\Ad_g^{-1}\beta}{x}} 
				\frac{\lambda(x)}{\pi^*g^*\omega(x)}
		= \infty
	.\]
	\end{itemize}
	
	We have proven that the fiber integral of $e^{-\beta} \lambda$ diverges on every fiber, which proves the result.
\end{proof}

The remaining case of $\sl(2, \setR)$ is handled in~\cite{NilpotentOrbitsGSI23}: the two nilpotent orbits have non-empty Gibbs sets, and the corresponding statistical structure is identified.
This case also appears in Neeb~\cite{NeebGibbsEnsembles} as the Lie algebra $\so(1,2)$.

%\begin{remark}
%    In fact, the argument we presented in this section can be adapted to more general index $2$ nilpotent matrices.
    %, and even for index $r+1$ nilpotent matrices with $\codim \ker x^r > 1$. 
%    This however does not cover the case $n \geqslant 3$, that is handled in Section~\ref{secno:NegationMap}.
%\end{remark}

\section{Conclusion}
We have proved that aside from the case of $\sl(2, \setR)$ handled in~\cite{NilpotentOrbitsGSI23}, the nonzero nilpotent orbits of $\sl(n, \setR)$ never admit Gibbs states.
We used three different arguments: the first one is that nilpotent orbits that are stable under negation cannot admit Gibbs states. The latter two rely on decompositions of the nilpotent orbit: first along affine lines, and then along the subset of nilpotent matrices with a common kernel.
Each of these arguments can be applied to nilpotent orbits of other Lie algebras; the former could also be applied to Hamiltonian manifolds equipped with a momentum-reversing involution.
These applications are left to future investigations.

\clearpage
\appendix

\section{Centralizer of a nilpotent matrix}
\label{secno:centralizer}

Let $n\in \setN$ and let $x\in \End(\setR^n)$ be a nilpotent endomorphism of index $r+1$.
In this section, we describe the matrices in $\End(\setR^n)$ which commute with $x$.
The only result we directly quote in the main part is Proposition~\ref{propno:DeterminantCentralizer}.

\subsection{Invariant filtrations}\label{secno:InvariantFlag}

In this section we look at invariant filtrations associated with $x$ and their consequences for the endomorphisms of $\setR^n$ that commute with $x$ of $\setR^n$.

Let $M$ be an endomorphism of $\setR^n$ that commutes with $x$. As a consequence, $M$ commutes with all powers of $x$, hence preserves their images and kernels. That is, $M$ preserves the following two filtrations:
\begin{align}
    \ker x \subset \ker x^2 \subset \dots \subset \ker x^r \subset \ker x^{r+1} = \setR^n\\
    0 = \im x^{r+1} \subset \im x^r \subset \im x^{r-1} \subset \dots \subset \im x
\end{align}

Combining these two invariant filtrations, we obtain the following property:
\begin{lemma}
    Let $M\in Z_{\End(\setR^n)} (x)$.
    Let $i, j$ be two integers such that $0\leqslant i,j \leqslant r$. 
    Then $M$ preserves $\ker x^i\cap \im x^j$. 
\end{lemma}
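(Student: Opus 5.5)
The plan is to check directly that both subspaces in the intersection are $M$-stable; an intersection of two $M$-stable subspaces is then automatically $M$-stable. The only ingredient is the commutation relation $Mx = xM$. By induction on $k$ it gives $Mx^k = x^kM$ for every $k\geqslant 0$: the case $k=0$ is trivial, and $Mx^{k+1} = (Mx^k)x = x^kMx = x^{k+1}M$.

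For the kernel, let $v\in\ker x^i$. Then
\[
    x^i(Mv) = M(x^iv) = M0 = 0,
\]
so $Mv\in\ker x^i$, and $\ker x^i$ is $M$-stable.

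For the image, let $v\in\im x^j$, say $v = x^jw$ with $w\in\setR^n$. Then
\[
    Mv = Mx^jw = x^j(Mw)\in\im x^j,
\]
so $\im x^j$ is $M$-stable.

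Finally, if $v\in\ker x^i\cap\im x^j$, the two previous steps give $Mv\in\ker x^i$ and $Mv\in\im x^j$, hence $Mv\in\ker x^i\cap\im x^j$. The range $0\leqslant i,j\leqslant r$ plays no role in the argument; it just matches the indices of the two filtrations listed above, with $x^0=\Id$, so that $\ker x^0 = 0$ and $\im x^0 = \setR^n$. There is no real obstacle in any step: this is a routine verification that only uses commutation with the powers of $x$.
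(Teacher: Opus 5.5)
Your proof is correct and follows the paper's argument: the paper also observes that $M$ commutes with every power of $x$, so it preserves each $\ker x^i$ and each $\im x^j$, and hence their intersection. You simply spell out the two stability checks and the induction $Mx^k = x^kM$, which the paper leaves implicit.
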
%
In other words, $M$ preserves the following \enquote{two-indices filtration} of $\setR^n$:
\begin{equation}\label{diagno:bidrapeau}
	\begin{tikzcd}[cramped]
		\im x^{r}
			\ar[d, phantom, "\subset" sloped]
	&	
	&
	&
	&
	\\
	\ker x \cap \im x^{r-1}
			\ar[d, phantom, "\subset" sloped]
			\ar[r, phantom, "\subset"]
	&
	\im x^{r-1}
			\ar[d, phantom, "\subset" sloped]
	&
	&
	&
	\\
	\ker x \cap \im x^{r-2}
			\ar[d, phantom, "\subset" sloped]
			\ar[r, phantom, "\subset"]
	&
	\ker x^2 \cap \im x^{r-2}
			\ar[d, phantom, "\subset" sloped]
			\ar[r, phantom, "\subset"]
	&
	\im x^{r-2}
			\ar[d, phantom, "\subset" sloped]
			\ar[rd, phantom, "\ddots"]
	&
	&
	\\
	\vdots
			\ar[d, phantom, "\subset" sloped]	
			\ar[r, phantom, "\subset"]
	&
	\vdots
			\ar[d, phantom, "\subset" sloped]	
			\ar[r, phantom, "\subset"]
	&
	\vdots
			\ar[d, phantom, "\subset" sloped]	
			\ar[r, phantom, "\subset"]
	&
	\im x
			\ar[d, phantom, "\subset" sloped]
	&
	\\
	\ker x
			\ar[r, phantom, "\subset"]
	&
	\ker x^2
			\ar[r, phantom, "\subset"]
	&
		\cdots 
			\ar[r, phantom, "\subset"]
	&
	\ker x^{r}
			\ar[r, phantom, "\subset"]
	&
	\ker x^{r+1} = \setR^n
	\end{tikzcd}	
\end{equation}
This can be flattened into a single-index filtration as follows:
\begin{lemma}
    Let us define the following subspaces of $\setR^n$:
    \begin{equation}\label{eq:defFij}
        \Flag_{i,j}
            := \ker x^{i-1} + \ker x^i \cap \im x^{r+2-i-j}
        \qquad
        \forall i \in \llbracket 1 , r + 1 \rrbracket, \,
        j \in \llbracket 1 , r + 2 -i \rrbracket
    .\end{equation}
    The subspaces $\Flag_{i,j}$ are increasing according to the lexicographical order. Enumerated in the lexicographical order, they form a filtration, which takes the following form:
    \begin{equation}\label{eq:xDrapeau}
    \begin{aligned}
        &\im x^{r}
            \subset \ker x \cap \im x^{r-1}
            \subset \ker x \cap \im x^{r-2}
            \subset \dots
            \subset  \ker x \cap \im x
            \subset \ker x
            \\
        &\subset \ker x + \im x^{r-1}
        \subset \ker x + \ker x^2 \cap \im x^{r-2}
        \subset \dots
        \\
        &\dots 
        \subset \ker x^{r-1} + \im x
        \subset \ker x^{r}
        \subset \ker x^{r+1} = \setR^n
    .\end{aligned}
%    \quad.
    \end{equation}
    Then this filtration is preserved by any endomorphism $M\in Z_{\End(\setR^n)} (x)$.
\end{lemma}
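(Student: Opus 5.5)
The statement has two parts: the $\Flag_{i,j}$ increase in lexicographic order (so they form a filtration), and every $M$ commuting with $x$ preserves each $\Flag_{i,j}$. I will prove the second part first, since it is immediate from the previous lemma. If $M$ commutes with $x$, then $M$ preserves $\ker x^{i-1}$. It also preserves $\ker x^i\cap \im x^{r+2-i-j}$. Hence it preserves their sum $\Flag_{i,j}$. This holds whether or not the family is totally ordered, so no further work is needed for invariance.

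For monotonicity I split into steps within a fixed $i$ and steps from the last $j$ of row $i$ to the first of row $i+1$.

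Within a row, fix $i$ and increase $j$ from $1$ to $r+1-i$. The exponent $r+2-i-j$ decreases, so $\im x^{r+2-i-j}$ grows, and hence $\Flag_{i,j}\subset\Flag_{i,j+1}$.

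Across rows, the last term of row $i$ is $j=r+2-i$, which gives exponent $0$:
\[
\Flag_{i,r+2-i}=\ker x^{i-1}+\ker x^i=\ker x^i .
\]
The first term of row $i+1$ is $j=1$:
\[
\Flag_{i+1,1}=\ker x^i+\ker x^{i+1}\cap\im x^{r-i},
\]
which contains $\ker x^i$. So the lexicographic chain is increasing.

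Next I match the chain with the displayed form~\eqref{eq:xDrapeau}, using two small observations.
\begin{itemize}
\item $\im x^{k}\subset \ker x^{r+1-k}$, because $x^{r+1}=0$. Thus for $j=1$ the intersection $\ker x^{i}\cap\im x^{r+1-i}$ equals $\im x^{r+1-i}$. This explains the terms $\ker x^{i-1}+\im x^{r+1-i}$ in the display.
\item For $i=1$, $\ker x^0=0$, which gives the first line $\im x^r\subset\ker x\cap\im x^{r-1}\subset\dots\subset\ker x$.
\end{itemize}
The last term is $\Flag_{r+1,1}=\ker x^r+\im x^0\cap\ker x^{r+1}=\setR^n$. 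The apparent repetition $\Flag_{i,r+2-i}=\ker x^i\subset\Flag_{i+1,1}$ is consistent with the display.

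No step is a real obstacle. The only care needed is the bookkeeping of index ranges at the row boundaries and checking that the paper's displayed first entries match the formula. In particular, I must verify that $j=1$ gives $\im x^{r+1-i}$ and not an empty or trivial intersection.
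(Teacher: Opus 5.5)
Your proof is correct and follows the paper's own (implicit) argument. The paper gives no separate proof: invariance comes from the preceding lemma, since $M$ preserves $\ker x^{i-1}$ and $\ker x^i\cap\im x^{r+2-i-j}$ and hence their sum, and monotonicity is immediate from the definition, exactly as you check. Your bookkeeping is right: the exponent decreases within a row, $\Flag_{i,r+2-i}=\ker x^i\subset\Flag_{i+1,1}$ at row boundaries, and $\ker x^i\cap\im x^{r+1-i}=\im x^{r+1-i}$ because $x^{r+1}=0$, which reconciles the definition with the displayed chain.
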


In particular, in any basis adapted to the filtration~\eqref{eq:xDrapeau} (namely, a basis such that the first $\dim \Flag_{i,j}$ vectors belong to $\Flag_{i,j}$), every element of $Z_{\End(\setR^n)} (x)$ has a block-wise triangular matrix. 

In order to describe the centralizer of $x$ precisely, we introduce a splitting (a compatible gradation) of the filtration $\Flag_{i,j}$ in the next section.

\subsection{$x$-chains}

We introduce here a decomposition of $\setR^n$ that will be essential to describe the centralizer of $x$.

Consider a basis $\Base$ of $\setR^n$ in which $x$ takes a Jordan canonical form. Each Jordan block corresponds to the action of $x$ on a vector subspace that is spanned by a \emph{cyclic} vector under the action of $x$. For every $j\leqslant r+1$, let $\mathcal{T}_j$ be the set of those cyclic vectors of $\Base$ that generate a subspace of dimension $j$ under the action of $x$.
The families $\mathcal{T}_j$ satisfy the following two properties:
\begin{enumerate}
    \item $0\notin x^{j-1}(\mathcal{T}_j)$,
    \item $\{
                x^i v
            \}_{
            \substack{
                1 \leqslant j \leqslant r+1
                \\
                v \in \mathcal{T}_i
                \\
                0\leqslant i < j}
                }$
            forms a basis of $\setR^n$.
\end{enumerate}
%
%For every $0< j \leqslant r$, let $T_j \subsetneq \ker x^j$ be a supplementary subspace to $\ker x^{j-1} + \im x \cap \ker x^j$:
%\[
%    \ker x^{j} = \left(
%        \ker x^{j-1} + \im x \cap \ker x^j \right)
%        \oplus  T_j
%\]
%
For every $0< j \leqslant r+1$, let $E_{0,j} = \Vect(\mathcal{T}_j) \subset \ker x^{j}$.
Then $x^{j-1}$ is injective on $E_{0,j}$ and $x^j$ vanishes on $E_{0,j}$. We define
\begin{equation}\label{eqno:DefEij}
    E_{i,j} := x^i (E_{0,j})
    \qquad
    i<j
.\end{equation}
%In particular, $E_{0,j} = T_j$.
The definition is extended to $E_{i,j} = \{0\}$ when $i\geqslant j$ (note that $E_{i,j}$ can be $0$ even when $i<j$, when $\mathcal{T}_j = \emptyset$).
The subspaces $\left( E_{i,j} \right)_{i<j\leqslant r+1}$ are in direct sum and form a decomposition of $\setR^n$. Let us write $p_{i,j}$ for the associated projectors on each $E_{i,j}$.

We illustrate the subspaces $E_{i,j}$ with the action of $x$ in Figure~\ref{fig:Eij}, for $r+1=4$.

\begin{figure}
    \centering
    \caption{Decomposition of $\setR^n$ into $(E_{i,j})_{i<j\leqslant 4}$.}
    \label{fig:Eij}
    \begin{tikzpicture}
    	[dot/.style={circle, fill, inner sep=1.5pt}]
    	\node at (0,0) [dot, label=below:$E_{0,1}$] {};
    	\node at (1,0) [dot, label=below:$E_{0,2}$] {};
        \node at (2,0) [dot, label=below:$E_{0,3}$] {};
    	\node at (3,0) [dot, label=below:$E_{0,4}$] {};
    	\node at (0,1) [dot, label=left:$E_{1,2}$] {};
    	\node at (1,1) [dot, label={[label distance = -6pt]225:$E_{1,3}$}] {};
    	\node at (2,1) [dot, label=above right:$E_{1,4}$] {};
    	\node at (0,2) [dot, label=left:$E_{2,3}$] {};
    	\node at (1,2) [dot, label=above right:$E_{2,4}$] {};
    	\node at (0,3) [dot, label=left:$E_{3,4}$] {};
	
    	\draw (1,0) -- (0,1);
    	\draw (2,0) -- (0,2);
    	\draw (3,0) -- (0,3);
	
        \draw[->] (2.9, 1.9) -- (1.9, 2.9);
        \node at (2.6,2.6) {$x$};
\end{tikzpicture}
\end{figure}

\begin{lemma}
    Let $i, j \in \llbracket 0, r +1 \rrbracket$ be such that $i+1 < j$.
    Then $x \circ p_{i,j} = p_{i+1,j} \circ x$ (with implicit embeddings $E_{i,j} \hookrightarrow \setR^n$).
\end{lemma}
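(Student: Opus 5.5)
We check the identity $x \circ p_{i,j} = p_{i+1,j} \circ x$ on each summand of the decomposition $\setR^n = \bigoplus_{k<l\leqslant r+1} E_{k,l}$. Both sides are linear, so it suffices to evaluate them on a vector $v \in E_{k,l}$ for an arbitrary pair $(k,l)$ with $k<l$. The one structural fact we use is that $x$ maps each summand into another summand: $x(E_{k,l}) = x^{k+1}(E_{0,l}) = E_{k+1,l}$. Here $E_{k+1,l}=\{0\}$ when $k+1 \geqslant l$, which is consistent because $x^l$ vanishes on $E_{0,l}$. This follows directly from the definition~\eqref{eqno:DefEij} and the property that $x^{l-1}$ is injective on $E_{0,l}$ while $x^l$ kills it.

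For the left-hand side, $p_{i,j}(v) = v$ if $(k,l)=(i,j)$ and $0$ otherwise, so $x\circ p_{i,j}(v)$ equals $xv$ in the first case and $0$ otherwise. For the right-hand side, $xv \in E_{k+1,l}$, so $p_{i+1,j}(xv) = xv$ if $(k+1,l) = (i+1,j)$, which is the same as $(k,l)=(i,j)$, and $p_{i+1,j}(xv)=0$ otherwise. Two points need a word. If $k+1 = l$, then $xv = 0$ and both sides vanish. The hypothesis $i+1<j$ guarantees that $E_{i+1,j}$ is one of the summands of the decomposition, so the projector $p_{i+1,j}$ is well defined; its image is the one containing $x(E_{i,j})$.

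Since the two sides agree on every summand, they agree on $\setR^n$. The argument is purely formal, and the only step that needs care is the bookkeeping of the boundary case $k+1=l$ and of possibly empty families $\mathcal{T}_j$. In the latter case both sides are zero maps, so no difficulty arises.
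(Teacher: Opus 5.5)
Your proof is correct and takes essentially the same approach as the paper. Both arguments use the decomposition $\bigoplus_{k<l} E_{k,l}$ and the inclusion $x(E_{k,l})\subset E_{k+1,l}$, then read off the $E_{i+1,j}$-component. The paper does this for a general $v=\sum v_{k,l}$ at once, while you check each summand separately, which amounts to the same thing by linearity.
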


\begin{proof}
    Let $v\in\setR^n$ and $v_{k,l} = p_{k,l} v \in E_{k,l}$ for $0\leqslant k < l \leqslant r +1$, so that $v = \sum_{k<l} v_{k,l}$.
    Then $x v_{k,l} \in E_{k+1, l}$.
    Consequently, when $i+1 <j$,
\[
    p_{i+1, j}(xv)
        = p_{i+1, j} \sum_{k< l} xv_{k,l} 
        = x v_{i,j}
.\]
\end{proof}
The subspaces $E_{i,j}$ can be used to define a splitting of the filtration~\eqref{eq:xDrapeau}.
\begin{lemma}\label{lmno:KerImEij}
    Let $i,j \leqslant r$ be two integers.
    \begin{enumerate}
        \item $\ker x^j
%            = \bigoplus_{
%            \substack{ i',j' \leqslant r \text{ such that} \\ 
%                j'-j \leqslant i' < j'}}
%                E_{i',j'}$
            = \bigoplus_{j'=1}^{r+1}
            \bigoplus_{i'=\max(0, j'-j)}^{j'-1}
                E_{i',j'}$
            ,
%        \item $\im x^i
%            = \bigoplus_{
%            \substack{ i \leqslant i' < j'}}
%                E_{i',j'}$
        \item $\im x^i
            = \bigoplus_{j'=i+1}^{r+1}
            \bigoplus_{i'=1}^{j'-1}
                E_{i',j'}$
            ,
%        \item $\ker x^j \cap \im x^i
%            = \bigoplus_{
%            \substack{ \max(i,j'-j) \leqslant i' < j'}}
%                E_{i',j'}$
        \item $\ker x^j \cap \im x^i
            = \bigoplus_{j'=i+1}^{r+1}
            \bigoplus_{i=\max(i, j'-j)}^{j'-1}
                E_{i',j'}$
            .
    \end{enumerate}
\end{lemma}
\begin{proof}
    Let us notice first that for all integers $i'<j'$ the restriction of $x^{i'}$ induces a linear isomorphism $E_{0,j'} \isom E_{i',j'}$. In particular if $E_{0,j'}\neq 0$ then for every $i'<j'$, the subspace $E_{i',j'}$ is non-trivial.
    Furthermore, given two integers $i'<i''<j'$, the restriction of $x^{i''-i'}$ defines a linear isomorphism $E_{i',j'}\isom E_{i'', j'}$.
    \begin{enumerate}
        \item Let $i', j' \leqslant r+1$ be two integers with $i'<j'$.
        If $j+i' \geqslant j'$, then $x^j(E_{i',j'}) = x^{j+i'} (E_{0,j'})=0$. Therefore $E_{i',j'}\subset \ker x^j$.

        On the other hand, if $j+i' < j'$ then the restriction of $x^{j}$ defines an isomorphism $E_{i',j'}\isom E_{j+i', j'}$. Consequently, $x^j$ is injective on 
%        $\bigoplus_{i'<j'-j} E_{i',j'}$.
        $\bigoplus_{j'=1}^{r+1} \bigoplus_{i'=0}^{j'-j-1} E_{i',j'}$.

        In conclusion, 
%        $\ker x^j = \bigoplus_{j'-j \leqslant i' < j'} E_{i',j'}$.
        $\ker x^j = 
            \bigoplus_{j'=1}^{r+1}
            \bigoplus_{i'=\max(0, j'-j)}^{j'-1}
                E_{i',j'}$.
        \item Applying $x^i$ to the decomposition 
%        $\setR^n = \bigoplus_{i' < j' \leqslant r+1} E_{i',j'}$, we obtain
        $\setR^n = \bigoplus_{j'=1}^{r+1} \bigoplus_{i'=0}^{j'-1} E_{i',j'}$, we obtain
        \[
            \im x^i = 
%            \sum_{i'<j'\leqslant r+1}
            \sum_{j'=1}^{r+1} \sum_{i'=0}^{j'-1}
            x^i(E_{i',j'})
%            = \sum_{i'<j'-i\leqslant r+1}
            = \sum_{j'=1}^{r+1} \sum_{i'=0}^{j'-i}
            E_{i+i', j'}
%            = \sum_{i\leqslant i' < j' \leqslant r+1}
            = \sum_{j'=i+1}^{r+1} \sum_{i'=i}^{j'-1}
            E_{i',j'}
        .\]
        This gives the result since the $E_{i',j'}$ are in direct sum.

        \item This is a direct consequence of 1. and 2.
    \end{enumerate}
\end{proof}

We illustrate the filtrations $\ker x^i$ and $\im x^j$ in Figure~\ref{fig:EijKerIm}, for $r+1=4$. The sum of the $E_{i,j}$ contained in any upper left quadrant (delimited by a lower right corner) is stable under the action of $Z_{\End(\setR^n)}(x)$.

\begin{figure}
    \centering
    \caption{Kernel and images of powers of $x$ decomposed into $(E_{i,j})_{i<j\leqslant 4}$.}
    \label{fig:EijKerIm}
    \begin{tikzpicture}
    	[dot/.style={circle, fill, inner sep=1.5pt}]
    	\node at (0,0) [dot, label=below left:$E_{0,1}$] {};
    	\node at (1,0) [dot, label=below:$E_{0,2}$] {};
        \node at (2,0) [dot, label=below:$E_{0,3}$] {};
    	\node at (3,0) [dot, label=below:$E_{0,4}$] {};
    	\node at (0,1) [dot, label=left:$E_{1,2}$] {};
    	\node at (1,1) [dot, label={[label distance = -6pt]225:$E_{1,3}$}] {};
    	\node at (2,1) [dot, label=above:$E_{1,4}$] {};
    	\node at (0,2) [dot, label=left:$E_{2,3}$] {};
    	\node at (1,2) [dot, label=above:$E_{2,4}$] {};
    	\node at (0,3) [dot, label=left:$E_{3,4}$] {};
	
    	\draw (1,0) -- (0,1);
    	\draw (2,0) -- (0,2);
    	\draw (3,0) -- (0,3);

    	\draw (-1,-0.6) -- (0.3,-0.6) -- (0.3, 0) [color = red];
    	\draw (0.3, 0) -- (0.3, 3.6) [color = red, dashed]
			node[pos=0.95, left] {$\ker x$};
	
    	\draw (-1,-0.7) -- (1.5,-0.7) -- (1.5, 0) [color = red];
    	\draw (1.5, 0) -- (1.5, 3.6) [color = red, dashed]
			node[pos=0.95, left] {$\ker x^2$};
		
    	\draw (-1,-0.8) -- (2.7,-0.8) -- (2.7, 0) [color = red];
    	\draw (2.7, 0) -- (2.7, 3.6) [color = red, dashed]
			node[pos=0.95, left] {$\ker x^3$};

        \draw (-1,2.7) -- (3,2.7) [color = blue, dashed];
    	\draw (3,2.7) -- (3.8,2.7) -- (3.8,4) [color=blue]
			node[near start, left] {$\im x^3$};
	
    	\draw (-1,1.7) -- (3,1.7) [color = blue, dashed];
    	\draw (3,1.7) -- (3.9,1.7) -- (3.9,4) [color=blue]
			node[pos=0.15, left] {$\im x^2$};
	
    	\draw (-1,0.6) -- (3,0.6) [color = blue, dashed];
    	\draw (3,0.6) -- (4,0.6) -- (4,4) [color=blue]
			node[pos=0.09, left] {$\im x$};
\end{tikzpicture}
\end{figure}

According to Lemma~\ref{lmno:KerImEij}, the spaces $E_{i,j}$ define a splitting of the filtration~\eqref{eq:xDrapeau} in the following sense: the filtration takes the form
\begin{equation}\label{eq:FijEij}
    \Flag_{i,j}
    = \ker x^{i-1} + \ker x^{i}\cap \im x^{r+2-i-j}
%    = \bigoplus_{
%        \substack{ i',j' \leqslant r+1 \text{ such that} \\ 
%            j'+1-i \leqslant i' < j'}}
%            E_{i',j'}
    = \bigoplus_{j'=1}^{r+1} 
        \bigoplus_{i'=j'+1-i}^{j'-1}
        \oplus
      \bigoplus_{j'=r+2-j}^{r+1}
            E_{j'-i,j'}
.\end{equation}

\subsection{Characterization of the centralizer of $x$}\label{secno:centralizerx}

We can now characterize the endomorphisms of $\setR^n$ that commute with $x$.

\begin{lemma}\label{lm:DiagrammeCommutatifM}
    Let $M\in \End(\setR^n)$.
    Then $M$ commutes with $x$ if and only if 
    for all integers $i,j$ such that $0 \leqslant i<j \leqslant r+1$ , $M(E_{i,j}) \subset \ker x^{j-i}$
	and the following diagram is commutative:
	\begin{equation}\label{diag:Mx}
	\begin{tikzcd}
        E_{0,j}
			\ar[d, "x^i|_{E_{0,j}}"', "\sim" sloped]
			\ar[r, "M|_{E_{0,j}}"]
			&
		\ker x^j
			\ar[d, "x^{i}|_{\ker x^{j}}" outer sep=5pt]
		\\
		E_{i,j}
			\ar[r, "M|_{E_{i,j}}"']
			&
		\ker x^{j-i}
	\end{tikzcd}
    \quad.
	\end{equation}
\end{lemma}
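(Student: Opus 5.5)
The plan is to exploit the direct sum decomposition $\setR^n = \bigoplus_{i<j} E_{i,j}$. An endomorphism $M$ is then determined by its restrictions to the $E_{i,j}$. Moreover, the commutation relation $Mx = xM$ can be tested on each summand separately. The key structural fact, from the proof of Lemma~\ref{lmno:KerImEij}, is that $x^i|_{E_{0,j}} : E_{0,j}\isom E_{i,j}$ is an isomorphism whenever $i<j$, and that $x$ maps $E_{i,j}$ isomorphically onto $E_{i+1,j}$ when $i+1<j$ and to $0$ when $i+1=j$.

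For the direct implication, assume $Mx=xM$. Then $M$ commutes with $x^{j-i}$. For $v\in E_{i,j}$ we have $x^{j-i}v = x^{j}w = 0$, where $w\in E_{0,j}$ satisfies $x^iw=v$. Hence $x^{j-i}Mv = Mx^{j-i}v = 0$, which gives $M(E_{i,j})\subset \ker x^{j-i}$. In particular $M(E_{0,j})\subset\ker x^j$, so the top arrow of~\eqref{diag:Mx} is well defined. Commutativity of the diagram reads $M x^i w = x^i M w$ for $w\in E_{0,j}$, and this follows immediately from $Mx^i = x^iM$.

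For the converse, assume the inclusions hold and every diagram~\eqref{diag:Mx} commutes. By linearity it suffices to check $Mxv = xMv$ for $v\in E_{i,j}$. Write $v = x^i w$ with $w\in E_{0,j}$. There are two cases. If $i+1<j$, then $xv = x^{i+1}w\in E_{i+1,j}$, and applying the diagram with index $i+1$ gives $Mxv = x^{i+1}Mw$. Applying the diagram with index $i$ gives $xMv = x\,x^iMw = x^{i+1}Mw$, so the two sides agree. If $i+1=j$, then $xv=0$, so $Mxv=0$. On the other side, the hypothesis $M(E_{j-1,j})\subset\ker x$ gives $xMv=0$. Hence $Mx=xM$ on every summand, and therefore everywhere.

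The only delicate point is the boundary case $i=j-1$, where the diagram alone carries no information about $x\circ M$. It is handled exactly by the kernel condition $M(E_{j-1,j})\subset \ker x$; in fact this condition already follows from the diagram applied with $i=j-1$ together with $M(E_{0,j})\subset\ker x^j$. The remaining work is bookkeeping of indices, including the convention $x^0=\id$ for $i=0$.
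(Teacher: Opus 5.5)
Your proof is correct and takes essentially the same route as the paper: check $Mx=xM$ separately on each summand $E_{i,j}$, use the kernel condition $M(E_{j-1,j})\subset\ker x$ for the boundary case $i+1=j$, and for $i+1<j$ reduce to the one-step relation $M x|_{E_{i,j}} = x M|_{E_{i,j}}$. The paper gets that relation by ``concatenating'' one-step diagrams; you derive it explicitly from the diagrams at indices $i$ and $i+1$, which is the same argument written out more carefully.
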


\begin{proof}
    If $M$ commutes with $x$, it preserves $\ker x^{j-i}$ for $i<j$. Since $E_{i,j} \subset \ker x^{j-i}$, we conclude that $M(E_{i,j}) \subset M(\ker x^{j-i}) \subset \ker x^{j-i}$.

    Now, assume that $M$ maps each $E_{i,j}$ to $\ker x^{j-i}$ when $i<j$.
    The condition $xM=Mx$ is satisfied on $\setR^n$ if and only if it holds on each subspace $E_{i,j}$. When $i+1=j$, both sides are $0$ and the condition is trivial. When $i+1<j$, it amounts to the following diagram being commutative:
	\begin{equation}\label{diag:MEij}
%	    \begin{tikzcd}
%			\ker x^{j-i-1}
%				&
%			\ker x^{j-i}
%				\ar[l, "x|_{\ker x^{j-i}}"' outer sep=5pt]
%			\\
%			E_{i+1,j}
%				\ar[u, "M|_{E_{i+1,j}}"]
%				&
%			E_{i,j}
%				\ar[l, "x|_{E_{i,j}}", "\sim"']
%				\ar[u, "M|_{E_{i,j}}"']
%	    \end{tikzcd}
    \begin{tikzcd}
        E_{i,j}
			\ar[d, "x|_{E_{i,j}}"', "\sim" sloped]
			\ar[r, "M|_{E_{i,j}}"]
			&
		\ker x^{j-i}
			\ar[d, "x^{i}|_{\ker x^{j-i}}" outer sep=5pt]
		\\
		E_{i+1,j}
			\ar[r, "M|_{E_{i+1,j}}"']
			&
		\ker x^{j-i-1}
	\end{tikzcd}
    \quad.    
    \end{equation}
    By concatenating the Diagrams~\eqref{diag:MEij} with varying $i$, we deduce that the commutativeness of the set of the diagrams~\eqref{diag:MEij} with $0 \leqslant i < j \leqslant r+1$ is equivalent to the commutativeness of the set of the diagrams~\eqref{diag:Mx} with $1 \leqslant i + 1 < j \leqslant r+1$.
\end{proof}

\begin{remark}
    The commutativeness of Diagram~\eqref{diag:Mx} implies that $M(E_{i,j}) \subset \ker x^{j-i} \cap \im x^i$. This fact will be developed in Section~\ref{secno:InvariantFlag} to triangularize $M$.
\end{remark}

\begin{proposition}[Centralizer of $x$ in $\End(\setR^n)$]\label{prop:Centralizerx}
    Let for all $1\leqslant k \leqslant r$ be a linear map $a_k : E_{0,k} \to \ker x^k$.
    There exists a unique endomorphism $A\in \End(\setR^n)$ that commutes with $x$ such that $A|_{E_{0,k}} = a_k$.
\end{proposition}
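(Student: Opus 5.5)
Both existence and uniqueness rest on the decomposition $\setR^n = \bigoplus_{i<j\leqslant r+1} E_{i,j}$ and on Lemma~\ref{lm:DiagrammeCommutatifM}. I read the statement as prescribing $a_k$ for every $k$ with $E_{0,k}$ possibly nonzero, so $1\leqslant k\leqslant r+1$. For $k=r+1$ the target $\ker x^{r+1}=\setR^n$ imposes no constraint, and the argument is the same.

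\emph{Uniqueness.} Suppose $A$ commutes with $x$ and satisfies $A|_{E_{0,k}}=a_k$. Fix $i<j$. The restriction $x^i|_{E_{0,j}}: E_{0,j}\to E_{i,j}$ is a linear isomorphism; this is noted at the start of the proof of Lemma~\ref{lmno:KerImEij}. Commutativity of Diagram~\eqref{diag:Mx} then forces
\[
A|_{E_{i,j}} = x^i\circ a_j \circ \left(x^i|_{E_{0,j}}\right)^{-1}.
\]
So $A$ is determined on every summand $E_{i,j}$, hence on all of $\setR^n$.

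\emph{Existence.} We take the formula above as a definition. On $E_{i,j}$, set
\[
A|_{E_{i,j}} := x^i\circ a_j\circ\left(x^i|_{E_{0,j}}\right)^{-1},
\]
and extend $A$ linearly using the projectors $p_{i,j}$, i.e. $A=\sum_{i<j} A|_{E_{i,j}}\circ p_{i,j}$. For $i=0$ this gives $A|_{E_{0,k}}=a_k$. We must check the two hypotheses of Lemma~\ref{lm:DiagrammeCommutatifM}.

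First, the range condition. Since $a_j$ takes values in $\ker x^j$ and $x^i(\ker x^j)\subset \ker x^{j-i}$, we get $A(E_{i,j})\subset\ker x^{j-i}$.

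Second, commutativity of Diagram~\eqref{diag:Mx}. This holds by construction: $A|_{E_{i,j}}\circ x^i|_{E_{0,j}} = x^i\circ a_j = x^i|_{\ker x^j}\circ A|_{E_{0,j}}$.

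Lemma~\ref{lm:DiagrammeCommutatifM} then gives $Ax=xA$. As a sanity check, one can also verify directly on $E_{i,j}$ with $i+1<j$ that $xA = Ax$, by composing isomorphisms along the chain. On $E_{j-1,j}$ the check is also direct: $x$ vanishes there, and $xA(E_{j-1,j})\subset x(\ker x)=0$.

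The main point to handle carefully is the bookkeeping of the index range in the statement. If $k$ runs only up to $r$, the block $E_{0,r+1}$, which is nonzero since $x$ has index $r+1$, would be left unconstrained and uniqueness would fail. I would therefore state explicitly that $k$ ranges over $1,\dots,r+1$, or equivalently over those $k$ with $\mathcal{T}_k\neq\emptyset$, and otherwise follow the argument above verbatim.
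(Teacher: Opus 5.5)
Your proof is correct and follows the paper's own argument. Uniqueness comes from Lemma~\ref{lm:DiagrammeCommutatifM} via the isomorphisms $x^i|_{E_{0,j}}$, and existence comes from defining $A|_{E_{i,j}} = x^i \circ a_j \circ (x^i|_{E_{0,j}})^{-1}$ and invoking the same lemma. Your reading of the index range as $1\leqslant k\leqslant r+1$ matches what the paper's proof actually uses, so the $r$ in the statement is a typo. Your explicit check that $A(E_{i,j})\subset\ker x^{j-i}$ supplies a step the paper leaves implicit.
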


\begin{proof}
    We start with uniqueness: it is a direct consequence of Lemma~\ref{lm:DiagrammeCommutatifM}.

    We now give the construction of $A$ given the family $(a_k)_{1\leqslant k \leqslant r+1}$. 
    Let $k\in \llbracket 1, r+1 \rrbracket$.
    The commutation condition of Diagram~\eqref{diag:MEij} takes the form:
	\[
		(A x^i)|_{E_{0,k}}
		= A|_{E_{i,k}} x^i |_{E_{0,k}} 
		= x^i A|_{E_{0,k}} 
		= x^i a_k
	.\] 
    Since $x^i$ establishes a bijection between $E_{0,k}$ and $E_{i,k}$, there is a unique solution $A|_{E_{i,k}}$.
    The direct sum of these maps $(A|_{E_{i,k}})_{i}$ defines on $\bigoplus_i x^i E_{0,k} = \bigoplus_i E_{i,k}$ a map to $\setR^n$ that commutes with $x$, since the condition of Lemma~\ref{lm:DiagrammeCommutatifM} is satisfied.

    Taking the direct sum of these maps for $1\leqslant k \leqslant r+1$, we obtain a map $A : \bigoplus_{i,j} E_{i,j} = \setR^n \to  \setR^n$. The map $A$ commutes with $x$ and its restriction to $E_{0,k}$ coincides with $a_k$ for all $1\leqslant k \leqslant r+1$.

\end{proof}

Let us give a more explicit decomposition of a generic element $M$ of $Z_{\End(\setR^n)}(x)$, using the subspaces $E_{i,j}$. According to Lemma~\ref{lmno:KerImEij}, 
$\ker x^j
%     = \bigoplus_{
%        \substack{ i',j' \leqslant r +1\text{ such that} \\ 
%            j'-j \leqslant i' < j'}}
%            E_{i',j'}$.
            = \bigoplus_{j'=1}^{r+1}
            \bigoplus_{i'=\max(0, j'-j)}^{j'-1}
                E_{i',j'}$.

For $k\in \llbracket 1, r+1 \rrbracket$, the application $a_k$ defined in Proposition~\ref{prop:Centralizerx} can be decomposed as a direct sum of maps
\[
    M^k_{i,j} : E_{0,k} \to E_ {i,j},
    \qquad
    \text{ with }
    k+i \geqslant j
.\]

Now, the action of $M$ on a given vector $v\in E_{i,j}$, can be expressed using the components $M^k_{i,j}$, according to Diagram~\eqref{diag:Mx}:
\[\begin{aligned}
    M v
    &= Mx^i \left( x^i|_{E_{0,j}} \right)^{-1} v\\
    &= x^i M \left( x^i|_{E_{0,j}} \right)^{-1} v\\
%    &= x^i \sum_{i'+j \geqslant j'} M^j_{i', j'} \left( x^i|_{E_{0,j}} \right)^{-1} v\\
    &= x^i \sum_{j'=1}^{r+1} \quad \sum_{i'=\max(0, j'-j)}^{j'-1}
           M^j_{i',j'} \left( x^i|_{E_{0,j}} \right)^{-1} v\\
%    &= \sum_{j'-i' \leqslant j}  x^i M^j_{i', j'} \left( x^i|_{E_{0,j}} \right)^{-1} v\\
    &= \sum_{j'=1}^{r+1} \quad \sum_{i'=\max(0, j'-j)}^{j'-1}
            x^i M^j_{i', j'} \left( x^i|_{E_{0,j}} \right)^{-1} v\\
%    &= \sum_{i < j'-i' \leqslant j}  x^i M^j_{i', j'} \left( x^i|_{E_{0,j}} \right)^{-1} v
    &= \sum_{j'=1}^{r+1} \quad \sum_{i'=\max(0, j'-j)}^{j'-i - 1}
    x^i M^j_{i', j'} \left( x^i|_{E_{0,j}} \right)^{-1} v
.\end{aligned}
\]
The last equality holds since $i+i' \geqslant j'$ implies that $x^i (E_{i',j'}) = 0$. The computation can be represented with the following commutative diagram:
\begin{equation}\label{diag:Mkij}
%	\begin{tikzcd}
%        E_{i'+i,j'}
%            &
%        E_{i',j'}
%			\ar[l, "x^{i}|_{E_{i',j'}}"' outer sep=5pt, "\sim"]
%        \\
%		\ker x^{j-i}
%            \ar[u, "p_{i'+i,j'}"]
%			&
%		\ker x^j
%            \ar[u, "p_{i',j'}"']
%			\ar[l, "x^{i}|_{\ker x^{j}}"' outer sep=5pt]
%		\\
%		E_{i,j}
%			\ar[u, "M|_{E_{i,j}}"]
%			&
%		E_{0,j}
%			\ar[l, "x^i|_{E_{0,j}}", "\sim"']
%			\ar[u, "M|_{E_{0,j}}"']
%            \ar[uu, "M^j_{i',j'}"', bend right=75]
%\end{tikzcd}\quad.\end{equation}
%
%\begin{equation}\label{diag:Mkij-horizontal}
\begin{tikzcd}[sep = large]
    E_{0,j}
        \ar[d, "x^i|_{E_{0,j}}"', "\sim" sloped]
        \ar[r, "M|_{E_{0,j}}"']
        \ar[rr, "M^j_{i',j'}", bend left=25]
&
    \ker x^j
        \ar[d, "x^{i}|_{\ker x^{j}}" outer sep=5pt]
        \ar[r, "p_{i',j'}"']
&
    E_{i',j'}
        \ar[d, "x^{i}|_{E_{i',j'}}" outer sep=5pt, "\sim"' sloped]
\\
    E_{i,j}
        \ar[r, "M|_{E_{i,j}}"']
&
    \ker x^{j-i}
        \ar[r, "p_{i'+i,j'}"']
&
    E_{i'+i,j'}
\end{tikzcd}\quad.
\end{equation}

Consequently, all components $E_{i, j}\to E_{i',j'}$ (with $i<j, i'<j'$) can be identified to components $M^k_{i,j}$ by using $x^i$ to identify $E_{i,j}$ with $E_{0,j}$. Here is a concrete example that uses a different naming convention from $M^k_{i,j}$ for readability purposes.

\begin{example}
    Suppose $r+1=3$ and that $x$ has only Jordan blocks of size $2$ and $3$, so that $\setR^n = E_{0,2} \oplus E_{1,2} \oplus E_{0,3} \oplus E_{1,3} \oplus E_{2,3}$. We order the subspaces as follows:
    \[
        \setR^n
            = \overbrace{
                  \underbrace{
                        E_{2,3} \oplus E_{1,2}
                  }_{\ker x}
                  \oplus E_{1,3} \oplus E_{0,2}
                }^{\ker x^2}
            \oplus E_{0,3}
    .\]
    Let $\Base_3$ (resp. $\Base_2$) be a basis of $E_{0,3}$ (resp. $E_{0,2}$). Then $x(\Base_3)$ (resp. $x(\Base_2)$) is a basis of $E_{1,3}$ (resp. $E_{1,2}$) and $x^2(\Base_3)$ is a basis of $E_{2,3}$. We obtain by concatenation the following basis of $\setR^n$:
    \[
        \Base
        = \big(
            x^2(\Base_3),
            x(\Base_2),
            x(\Base_3),
            \Base_2,
            \Base_3
        \big)
    .\]
    
    In the basis $\Base$, the centralizer of $x$ is the set of matrices of the following form, with labels to indicate the decomposition of $\setR^n$ into subspaces (empty entries are zero):

    \NiceMatrixOptions{
    code-for-first-row = {\RowStyle[cell-space-bottom-limit=7pt]{}}}% Met de l'espace entre la matrice et les labels de chaque ligne/colonne
    
    \[
        \begin{pNiceMatrix}[first-row, first-col]
            & E_{2,3}
            & E_{1,2}
            & E_{1,3}
            & E_{0,2} 
            & E_{0,3}
           \\
            E_{2,3} \quad
            & A_1
            & B_2
            & C_1
            & D_2
            & E_1
            \\
            E_{1,2} \quad
            &
            & A_2
            & B_1
            & C_2
            & D_1
            \\
            E_{1,3} \quad
            &
            & 
            & A_1
            & B_2
            & C_1
            \\
            E_{0,2} \quad
            &
            & 
            & 
            & A_2
            & B_1
            \\
            E_{0,3} \quad
            &
            & 
            & 
            & 
            & A_1           
        \end{pNiceMatrix}
    \quad.\]
\end{example}

%\begin{proposition}
%    Let $M\in Z_{\End(\setR^n)} (x)$.
%    Then $M$ is an invertible endomorphism if and only for all $0 \leqslant j \leqslant r$, the endomorphism $p_{0,j}\circ M|_{T_j}$ is invertible.
%\end{proposition}
%
%\begin{proof}
%    $M$ preserves the flag $\Flag_{i,j}$.
%    According to Equation~\eqref{eq:FijEij}, 
%    \begin{align}\label{eqn: flag_complements}
%        \Flag_{i, j+1}
%            &= 
%        \Flag_{i,j} \oplus E_{r-j+1-i,r-j+1},
%        \qquad j\leqslant r-i,
%        \\
%        \Flag_{i+1, 1}
%            &= \Flag_{i, r+1-i} \oplus E_{r-i-1, r}
%        .
%    \end{align} 
   % Notice that every $E_{i,j}$ for $1 \leqslant i, j \leqslant r$ such that $i<j$ appears as a \enquote{difference} of two subsequent subspaces of the flag $\Flag$.
  %  Therefore, $M$ is invertible if and only if for every $1 \leqslant i, j \leqslant r$ such that $i<j$, $p_{i,j}\circ M|_{E_{i,j}}$ is an invertible endomorphism of $E_{i,j}$.
 %   According to Diagram~\eqref{diag:Mkij}, this is equivalent to each $M^j_{0,j} = p_{0,j}\circ M|_{E_{0,j}}$ being invertible.
%\end{proof}

The determinant of an endomorphism that commutes with $x$ is easily obtained from its components:
\begin{proposition}\label{propno:DeterminantCentralizer}
    Let $M\in Z_{\End(\setR^n)} (x)$.
    Define for all $0 \leqslant j \leqslant r+1$
    \[
        M_j:= p_{0,j}\circ M|_{E_{0,j}} \in \End(E_{0,j})
    .\]

    Then 
    \[
        \det(M) = \prod_{j=1}^n \det(M_j)^j
    .\]
\end{proposition}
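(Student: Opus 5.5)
We will prove the formula $\det(M) = \prod_j \det(M_j)^j$ by triangularizing $M$ along the filtration~\eqref{eq:xDrapeau} and reading off the diagonal blocks. (The product is effectively over $1\leqslant j\leqslant r+1$, since $E_{0,j}=0$ and $\det(M_j)=1$ for $j>r+1$.)

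\textbf{Step 1: block triangular form.} Choose a basis of $\setR^n$ adapted to the decomposition $\setR^n = \bigoplus_{i<j} E_{i,j}$. Order the blocks so that the successive partial sums are the spaces $\Flag_{i,j}$ of~\eqref{eq:xDrapeau}; by~\eqref{eq:FijEij}, each step of the filtration adds exactly one summand $E_{i',j'}$. Since $M$ preserves every $\Flag_{i,j}$, its matrix in this basis is block upper triangular. Hence
\[
\det(M)=\prod_{i<j}\det\big(p_{i,j}\circ M|_{E_{i,j}}\big).
\]
It is cleaner to argue directly, via the remark after Lemma~\ref{lm:DiagrammeCommutatifM}: $M(E_{i,j})\subset \ker x^{j-i}\cap \im x^i$. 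Lemma~\ref{lmno:KerImEij} decomposes this space into summands $E_{i',j'}$. Writing $F_{<(i,j)}$ for the sum of the summands preceding $E_{i,j}$ in the chosen order, one checks that every summand other than $E_{i,j}$ itself lies in $F_{<(i,j)}$. This gives the required triangularity.

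\textbf{Step 2: diagonal blocks.} We show that $p_{i,j}\circ M|_{E_{i,j}}$ is conjugate to $M_j$. Apply Diagram~\eqref{diag:Mkij} with $(i',j')=(0,j)$. On $E_{0,j}$ we have
\[
p_{i,j}\circ M\circ x^i = p_{i,j}\circ x^i\circ M = x^i\circ p_{0,j}\circ M,
\]
using the lemma $x\circ p_{k,j}=p_{k+1,j}\circ x$ iterated, together with the fact that $x^i$ kills any component that would otherwise land in $E_{i,j}$. Since $x^i|_{E_{0,j}}\colon E_{0,j}\isom E_{i,j}$ is an isomorphism, this yields
\[
p_{i,j}\circ M|_{E_{i,j}}=x^i\, M_j\,(x^i|_{E_{0,j}})^{-1},
\]
so its determinant equals $\det(M_j)$.

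\textbf{Step 3: counting.} For fixed $j$ there are exactly $j$ indices $i\in\{0,\dots,j-1\}$. Multiplying the diagonal determinants therefore gives $\prod_j \det(M_j)^j$.

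\textbf{Main obstacle.} The delicate point is the index bookkeeping in Step 1: verifying that the order induced by~\eqref{eq:xDrapeau} places every summand appearing in $M(E_{i,j})$, other than $E_{i,j}$ itself, strictly before $E_{i,j}$. One way is to check directly that $\ker x^{j-i}\cap\im x^i \subset \Flag_{<}+E_{i,j}$.

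A more robust alternative avoids the flag altogether. Order the summands by the pair $(j'-i',\,-i')$ and show that the component $E_{i,j}\to E_{i',j'}$ of $M$ vanishes unless $j'-i'<j-i$, or $j'-i'=j-i$ and $i'\geqslant i$. This follows from $M(E_{i,j})\subset\ker x^{j-i}\cap\im x^i$ and Lemma~\ref{lmno:KerImEij}. Within a level $j'-i'=j-i$ with $i'=i$, the components $E_{i,j}\to E_{i,j'}$ force $j'=j$, so the diagonal blocks are exactly the $p_{i,j}M|_{E_{i,j}}$. I would try this ordering first.
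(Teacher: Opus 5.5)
Your proof is correct and follows the paper's argument. Like the paper, you block-triangularize $M$ along the filtration $\Flag$, show that each diagonal block $p_{i,j}\circ M|_{E_{i,j}}$ is conjugate to $M_j$ via $x^i|_{E_{0,j}}$ (the content of Diagram~\eqref{diag:Mkij}), and count $j$ such blocks for each $j$. Your ``more robust'' lexicographic order on $(j'-i',-i')$ is exactly the order in which the filtration~\eqref{eq:xDrapeau} adds the summands $E_{i',j'}$, so your direct check via $M(E_{i,j})\subset\ker x^{j-i}\cap\im x^i$ and Lemma~\ref{lmno:KerImEij} is an explicit verification of the same triangularity, not a different route.
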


\begin{proof}
    $M$ preserves the filtration $\Flag_{i,j}$.
    According to Equation~\eqref{eq:FijEij}, 
    \begin{align}\label{eqn: flag_complements}
        \Flag_{i, j+1}
            &= 
        \Flag_{i,j} \oplus E_{r-j+1-i,r-j+1},
        \qquad j\leqslant r+1-i,
        \\
        \Flag_{i+1, 1}
            &= \Flag_{i, r+1-i} \oplus E_{r-i-1, r+1}
        .
    \end{align} 
    Notice that for every $1 \leqslant i, j \leqslant r+1$ such that $i<j$, the space $E_{i,j}$ appears as a \enquote{difference} (quotient) of two subsequent subspaces of the filtration $\Flag$.
    Therefore, the determinant of $M$ can be expressed as a block-wise determinant:
    \[
        \det(M) = \prod_{0 \leqslant  i<j \leqslant r+1} \det(p_{i,j}\circ M|_{E_{i,j}}).
    \]
    According to Diagram~\eqref{diag:Mkij}, $\det(p_{i,j}\circ M|_{E_{i,j}}) = \det(p_{0,j}\circ M|_{E_{0,j}})$, which proves the result.
\end{proof}
As a consequence, we obtain an invertibility criterion for the endomorphisms commuting with $x$:
\begin{corollary}
    Let $M\in Z_{\End(\setR^n)} (x)$.
    Then $M$ is an invertible endomorphism if and only for all $0 \leqslant j \leqslant r+1$, the endomorphism $p_{0,j}\circ M|_{E_{0,j}}$ is invertible.
\end{corollary}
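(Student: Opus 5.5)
The corollary follows directly from Proposition~\ref{propno:DeterminantCentralizer}, which gives
\[
    \det(M) = \prod_{j} \det(M_j)^j, \qquad M_j = p_{0,j}\circ M|_{E_{0,j}} .
\]
Since $M$ is an endomorphism of a finite-dimensional space, $M$ is invertible if and only if $\det(M)\neq 0$. A finite product of real numbers vanishes exactly when one of its factors vanishes, and $\det(M_j)^j \neq 0$ if and only if $\det(M_j)\neq 0$, because $j\geqslant 1$.

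It remains to see why the index $j=0$ in the statement causes no trouble. For $j=0$ the space $E_{0,0}$ is $\{0\}$, since $E_{i,j}=\{0\}$ whenever $i\geqslant j$. The endomorphism of the zero space is invertible and has determinant $1$ by convention. The same convention covers every empty $E_{0,j}$, that is, every $j$ for which $\mathcal{T}_j=\emptyset$. These factors contribute $1$ to the product and impose a vacuous condition in the statement. The product in Proposition~\ref{propno:DeterminantCentralizer} is written over $j$ from $1$ to $n$, but every index beyond $r+1$ again corresponds to a zero space, so it agrees with the index range of the corollary.

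Combining these facts: $\det(M)\neq 0$ holds if and only if $\det(M_j)\neq 0$ for every $j$, which holds if and only if each $M_j = p_{0,j}\circ M|_{E_{0,j}}$ is invertible. There is no real obstacle here. The only point requiring care is this bookkeeping of trivial blocks, so that the equivalence is not spoiled by zero-dimensional factors.
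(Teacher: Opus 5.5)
Your proposal is correct and follows the same route as the paper, which derives the corollary directly from the determinant formula $\det(M)=\prod_j \det(M_j)^j$ of Proposition~\ref{propno:DeterminantCentralizer}. Your treatment of the zero-dimensional blocks ($j=0$, empty $\mathcal{T}_j$, and indices beyond $r+1$) is a harmless clarification that the paper leaves implicit.
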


\printbibliography

@book{SSDEng,
  title={Structure of Dynamical Systems: A Symplectic View of Physics},
  author={Souriau, J.M.},
  isbn={9780817636951},
  lccn={97000300},
  series={Progress in Mathematics},
  year={1997},
  publisher={Birkh{\"a}user Boston}
}

@inproceedings{SouriauMecaStat,
    author = {Souriau, Jean-Marie},
    title = {M\'ecanique Statistique, groupes de Lie et cosmologie},
    booktitle = {G\'eom\'etrie symplectique et physique math\'ematique},
    series = {Colloques Internationaux du CNRS},
    year = 1974,
    language = {french}
}

@article{MarleExamples,
	author = {Marle, Charles-Michel},
	title = {{Examples of Gibbs States of Mechanical Systems with Symmetries}},
	volume = {58},
	journal = {Journal of Geometry and Symmetry in Physics},
	publisher = {Bulgarian Academy of Sciences, Institute of Mechanics},
	pages = {55 -- 79},
	year = {2020},
	doi = {10.7546/jgsp-58-2020-55-79},
	URL = {https://doi.org/10.7546/jgsp-58-2020-55-79}
}

@article{MarleGibbsStates,
  title = {On Gibbs States of Mechanical Systems with Symmetries},
  volume = {57},
  ISSN = {1314-5673},
  url = {http://dx.doi.org/10.7546/jgsp-57-2020-45-85},
  DOI = {10.7546/jgsp-57-2020-45-85},
  journal = {Journal of Geometry and Symmetry in Physics},
  publisher = {Prof. Marin Drinov Publishing House of BAS (Bulgarian Academy of Sciences)},
  author = {Marle,  Charles-Michel},
  year = {2020},
  pages = {45–85}
}

@misc{HessianGas,
      title={The Hessian geometry of the ideal gas in rotation}, 
      author={Jérémie Pierard de Maujouy},
      year={2024},
      eprint={2404.09035},
      archivePrefix={arXiv},
      primaryClass={math-ph},
      url={https://arxiv.org/abs/2404.09035}, 
}

@article{TojoYoshino,
  title = {A method to construct exponential families by representation theory},
  volume = {5},
  ISSN = {2511-249X},
  url = {http://dx.doi.org/10.1007/s41884-022-00072-y},
  DOI = {10.1007/s41884-022-00072-y},
  number = {2},
  journal = {Information Geometry},
  publisher = {Springer Science and Business Media LLC},
  author = {Tojo,  Koichi and Yoshino,  Taro},
  year = {2022},
  month = oct,
  pages = {493–510}
}

@book{KirillovOrbitMethod,
  title={Lectures on the Orbit Method},
  author={Kirillov, A.A.},
  isbn={9780821835302},
  lccn={2004047940},
  series={Graduate studies in mathematics},
  year={2004},
  publisher={American Mathematical Society}
}

@misc{CompactCoadjointOrbits,
      title={Compact Coadjoint Orbits}, 
      author={John Rawnsley},
      year={2003},
      eprint={math/0306336},
      archivePrefix={arXiv},
      primaryClass={math.RT},
      url={https://arxiv.org/abs/math/0306336}, 
}

@InProceedings{NilpotentOrbitsGSI23,
	author={Bieliavsky, Pierre
	and Dendoncker, Valentin
	and Neuttiens, Guillaume
	and de Maujouy, J{\'e}r{\'e}mie Pierard},
	editor={Nielsen, Frank
	and Barbaresco, Fr{\'e}d{\'e}ric},
	title={Riemannian Geometry of Gibbs Cones Associated to Nilpotent Orbits of Simple Lie Groups},
	booktitle={Geometric Science of Information},
	year={2023},
	publisher={Springer Nature Switzerland},
	pages={144--151},
	isbn={978-3-031-38299-4}
}

@book{MichorTopicsDG,
  title = {Topics in Differential Geometry},
  ISBN = {9781470411619},
  ISSN = {1065-7339},
  url = {http://dx.doi.org/10.1090/GSM/093},
  DOI = {10.1090/gsm/093},
  journal = {Graduate Studies in Mathematics},
  publisher = {American Mathematical Society},
  author = {Michor,  Peter},
  year = {2008},
  month = jul 
}

@book{LeeManifolds,
  title={Introduction to Smooth Manifolds},
  edition ={2},
  author={Lee, J.M.},
  isbn={9780387954486},
  lccn={2002070454},
  series={Graduate Texts in Mathematics},
  year={2003},
  publisher={Springer}
}

@misc{NeebGibbsEnsembles,
      title={A classification of coadjoint orbits carrying Gibbs ensembles}, 
      author={Karl-Hermann Neeb},
      year={2026},
      eprint={2601.04934},
      archivePrefix={arXiv},
      primaryClass={math.SG},
      url={https://arxiv.org/abs/2601.04934}, 
}

@book{SemiClassicalAnalysis,
    author = {Guillemin, Victor and Sternberg, Shlomo},
    title = {Semi-Classical Analysis},
    publisher = {International Press of Boston Inc.},
    year = {2013} 
}

\end{document}